\def\prob{\mathbb{P}}
\def\expt{\mathbb{E}}
\def\real{\mathbb{R}}
\newcommand{\until}[1]{\{1,\dots, #1\}}
\newcommand{\subscr}[2]{#1_{\textup{#2}}}
\newcommand{\supscr}[2]{#1^{\textup{#2}}}
\newcommand{\setdef}[2]{\{#1 \; | \; #2\}}
\newcommand{\map}[3]{#1: #2 \rightarrow #3}
\newcommand\oprocendsymbol{\hbox{$\square$}}
\newcommand\oprocend{\relax\ifmmode\else\unskip\hfill\fi\oprocendsymbol}
\newcommand\bit[1]{\textit{\textbf{#1}}}
\def \bs {\boldsymbol}
\def \mc {\mathcal}
\def \etal {\emph{et al.}}
\def \mr {\mathrm}
\newtheorem{theorem}{Theorem}
\newtheorem{proposition}[theorem]{Proposition}
\newtheorem{lemma}[theorem]{Lemma}
\newtheorem{remark}{Remark}
\newtheorem{assumption}{Assumption}
\newtheorem{definition}{Definition}
\title{Collective Decision-Making in Ideal Networks:\\
 The Speed-Accuracy Tradeoff
\thanks{This research has been supported in part by ONR grant N00014-09-1-1074 and ARO grant W911NG-11-1-0385.}
\thanks{An earlier version of this work~\cite{VS-NEL:13a} entitled "On the Speed-Accuracy Tradeoff in Collective Decision Making" was presented at the 2013 IEEE Conference on Decision and Control. This work improves the work in~\cite{VS-NEL:13a} and extends it to more general decision-making scenarios.}
}
\author{Vaibhav~Srivastava,~\IEEEmembership{Member,~IEEE}~and~Naomi Ehrich Leonard,~\IEEEmembership{Fellow,~IEEE}
\thanks{V. Srivastava and N. E. Leonard are with the  Mechanical and Aerospace Engineering Department, Princeton University, Princeton, NJ, USA,\tt{ \{vaibhavs, naomi\}@princeton.edu}.}}
\begin{document}

\maketitle

\begin{abstract}
We study collective decision-making in a model of  human groups, with network interactions, performing  two alternative choice tasks. 
{We focus on the speed-accuracy tradeoff, i.e., the tradeoff between a quick decision and a reliable decision, for individuals in the network.}
We model the evidence aggregation process across the network using a coupled drift diffusion model (DDM) and consider the free response paradigm in which individuals take their time to make the decision. 
We develop reduced DDMs as decoupled approximations to the coupled DDM and characterize their efficiency. 
We determine  high probability bounds on the error rate and the expected decision time for the reduced DDM. 
%We also derive bounds on first passage time distributions for the reduced DDM.
We show the effect of the decision-maker's location in the network on their decision-making performance under several threshold selection criteria. Finally, we extend the coupled DDM to the coupled Ornstein-Uhlenbeck model for decision-making in two alternative choice tasks with recency effects, and to the coupled race model for decision-making in multiple alternative choice tasks. 
\end{abstract}

{
\begin{keywords}
Distributed decision-making, coupled drift-diffusion model, decision time, error rate, coupled Orhstein-Uhlenbeck model, coupled race model, distributed sequential hypothesis testing
\end{keywords}}

\section{Introduction}
Collective cognition and decision-making in human and animal groups have received significant attention 
in a broad scientific community~\cite{IDC:09,LC-CL:09,RDS-CJH-RW:01}. 
Extensive research has led to several models for information assimilation in social networks~\cite{DE-JK:10, MOJ:10}.  Efficient models for decision-making dynamics of a single individual have also been developed~\cite{RB-EB-etal:06,RR:78,RR-GM:08}.
However,  applications like  the deployment of a team of human operators that supervises the operation of automata in complex and uncertain environments involve joint evolution of information assimilation and decision dynamics across the group and the possibility of a \emph{collective intelligence}. 
A principled approach to 
 modeling and analysis of such \emph{socio-cognitive networks} is fundamental to understanding team performance.

In this paper, we focus on the speed-accuracy tradeoff in collective decision-making primarily using the context of problems in which the decision-maker must choose between two alternatives. The speed-accuracy tradeoff is the fundamental tradeoff between a quick decision and a reliable decision. The two alternative choice problem is a simplification of many decision-making scenarios and captures the essence of the speed-accuracy tradeoff in a variety of situations encountered by animal groups~\cite{JWWA-JEH-DJTS-JS:11,VG-IDC:10}. Moreover, human performance in two alternative choice tasks is extensively studied and well understood~\cite{RB-EB-etal:06,RR:78,RR-GM:08}. In particular, human performance in a two alternative choice task is modeled well by the \emph{drift-diffusion model} (DDM) and its variants;  variants of the DDM under optimal choice of parameters are equivalent to the DDM.

Collective decision-making in human groups is typically studied under two extreme communication regimes: the so-called \emph{ideal group} and the \emph{Condorcet group}. In an ideal group, each decision-maker interacts with every other decision-maker and the group arrives at a consensus decision.  In a Condorcet group, decision-makers do not interact with one another; instead a majority rule  is employed to reach a decision. { In this paper we study a generalization of the ideal group, namely, \emph{the ideal network}. In an ideal network, each decision-maker interacts only with a subset of other decision-makers in the group, and the group arrives at a consensus decision. }

Human decision-making is typically studied in two paradigms, namely, \emph{interrogation}, and \emph{free response}. In the interrogation paradigm, the human has to make a decision at the end of a prescribed time duration, while in the free response paradigm, the human takes his/her time to make a decision. { In the model for human decision-making in the interrogation (free response) paradigm, the decision-maker compares the decision-making evidence against a single threshold (two thresholds) and makes a decision. In the context of the free response paradigm, the choice of the thresholds dictates the speed-accuracy tradeoff in decision-making.}

Collective decision-making in ideal human groups and Condorcet human groups is studied in~\cite{RDS-CJH-RW:01} using the classical signal detection model for human performance in two alternative choice tasks. 
Collective decision-making in Condorcet human groups using the DDM and the free response paradigm is studied in~\cite{MK-JM:12,SHD-RC-FB:09y}.
Collective decision-making in ideal human groups using the DDM and the interrogation paradigm is studied in~\cite{IP-LS-NEL:12}. Related collective decision-making models in animal groups are studied in~\cite{DP-NEL:12}. In this paper, we study the free response paradigm for collective decision-making  in ideal networks using the DDM, the Ornstein-Uhlenbeck (O-U) model, and the race model~\cite{TM-PH:06}.  

The DDM is a continuum approximation to the evidence aggregation process in a hypothesis testing problem. Moreover, the hypothesis testing problem with fixed sample size and the sequential hypothesis testing problem correspond to the interrogation paradigm and the free response paradigm in human decision-making, respectively. 
Similarly, the race model in the free response paradigm is a continuum approximation of an asymptotically optimal sequential multiple hypothesis test proposed in~\cite{VPD-AGT-VVV:99}.
Consequently, the
collective decision-making problem in human groups is similar to distributed hypothesis testing problems studied in the engineering literature~\cite{RSB-SAK-HVP:97,PB-SM-VM-PW:10,ROS-EF-EF-JSS:06,DB-DJ-etal:11}. 
In particular, 
Braca~\etal~\cite{PB-SM-VM-PW:10} study distributed implementations of the hypothesis testing problem with fixed sample size as well as the sequential hypothesis testing problem. 
They use the \emph{running consensus} algorithm~\cite{PB-SM-VM:08} to aggregate the test statistic across the network and show that the proposed algorithm achieves the performance of a centralized algorithm asymptotically. 
{We rely on the Laplacian flow~\cite{ ROS-JAF-RMM:07} to aggregate evidence across the network. 
Our information aggregation model is the continuous time equivalent of the running consensus algorithm with fixed network structure. In contrast to showing the asymptotic behavior of the running consensus as in~\cite{PB-SM-VM-PW:10}, we characterize the finite time behavior of our information aggregation model  in the free response paradigm.}

%{
%Learning in social networks is a topic of increasing research~\cite{MOJ:10, BG-MOJ:10, DA-AO-AP:10, AJ-AS-ATS:10}. The classical consensus seeking DeGroot model~\cite{MHDG:74} has been used by Golub~and~Jackson~\cite{BG-MOJ:10} to explain na{\"i}ve learning in  social networks. Chandrasekhar~\etal~\cite{AC-HL-JX:12} have noted that the DeGroot model explains the social learning data better than other models. One drawback of the DeGroot model is that does not capture the arrival of new information over time. Jadbabaie~\etal~\cite{AJ-AS-ATS:10} extend the DeGroot model to allow for the arrival of new information over time. 
%These social learning models capture the information assimilation across the network well, however, in many scenarios, the quantity of interest is the decisions made by individuals in the network based on the information available to them. To this end, we incorporate cognitive models of human decision-making on top of the above information assimilation models. 
%The information aggregation model considered in this paper is the continuous time equivalent of the DeGroot model. Using cognitive models for human decision-making in two-alternative choice tasks, we study the translation of the assimilated information into decisions in  social networks. 
%}

An additional relevant literature concerns the sensor selection problem in decentralized decision-making. In the context of  sequential multiple hypothesis testing with a fusion center, a sensor selection problem to minimize the expected decision time for a prescribed error rate has been studied in~\cite{VS-KP-FB:08p}. 
In the present paper, 
for a prescribed error rate, we characterize the expected decision time as a function of the node centrality.
Such a characterization can be used to select a representative node that has the minimum expected decision time, among all nodes, for a prescribed error rate.

In this paper, we begin by studying the speed-accuracy tradeoff in collective decision-making using the context of two alternative choice tasks. We model the evidence aggregation across the network using the Laplacian flow based coupled DDM~\cite{IP-LS-NEL:12}. 
{ In order to determine the decision-making performance of each individual in the network,}
 we develop a decoupled approximation to the coupled DDM, namely, \emph{the reduced DDM}, and characterize its properties. We extend the coupled DDM to the context of decision-making in two alternative choice tasks with recency effects, i.e., tasks 
in which the recently collected evidence weighs more that the evidence collected earlier. We also extend the coupled DDM to the context of decision-making in
 multiple alternative choice tasks. 

The major contributions of this paper are fivefold. First, we propose a set of reduced DDMs, a novel decoupled approximation to the coupled DDM and characterize the efficiency of the approximation. Each reduced DDM comprises two components: (i) a centralized component common to each reduced DDM, and (ii) a local component that depends on the location of the decision-maker in the network. 

Second, we present partial differential equations (PDEs) to determine the expected decision time and the error rate for each reduced DDM. We also derive high probability bounds on the expected decision time and the error rate for the reduced DDMs. Our bounds rely on the first passage time properties of the O-U process. 

Third, we numerically demonstrate that, for large thresholds, the error rates and the expected decision times for the coupled DDM are approximated well by the corresponding quantities for a centralized DDM with modified thresholds. We also obtain an expression for threshold modifications (referred to as threshold corrections) from our numerical data. 

Fourth, we examine various threshold selection criteria and analyze the decision-making performance as a function of  the decision-maker's location in the network. Such an analysis is helpful in selecting representative nodes for high performance in decision-making, e.g., selecting a node that has the minimum expected decision time, among all nodes, for a prescribed maximum probability of error.  

Fifth, we extend the coupled DDM to the coupled O-U model and the coupled race model for collective decision-making in two alternative choice tasks with recency effects and multiple alternative choice tasks, respectively. 

The remainder of the paper is organized as follows. We review decision-making models for individual humans and human groups in Section~\ref{sec:models}. 
We present properties of the coupled DDM in Section~\ref{sec:prop-coupled-ddm}. We propose the { reduced} DDM and characterize its performance in the free response decision-making paradigm in Section~\ref{sec:reduced-ddm-free-response}. We present some numerical illustrations and results in Section~\ref{sec:numerical}. We examine various threshold selection criteria and the effect of the decision-makers's location in the network on their decision-making performance in Section~\ref{sec:threshold-design}. We extend the coupled DDM to the coupled O-U model and the coupled race model in Section~\ref{sec:other-models}.
Our conclusions are presented in Section~\ref{sec:conclusions}.

\section{Human Decision-Making Models}\label{sec:models}
In this section, we survey models for human decision-making. We present the drift diffusion model (DDM) and the coupled DDM  that capture  individual and network decision-making in a two alternative choice task, respectively.

\subsection{Drift Diffusion Model}\label{sec:sprt}
A two alternative choice task~\cite{RB-EB-etal:06}  is a decision-making scenario in which a decision-maker has to {choose} between two plausible alternatives. In a two alternative choice task, the difference between the log-likelihood of each alternative (evidence) is aggregated
and the aggregated evidence is compared against thresholds to make a decision. 
The evidence aggregation  is modeled well by the drift-diffusion process~\cite{RB-EB-etal:06} defined by
\begin{equation}\label{eq:ddm}
\mr d x (t) = \beta \mathrm{d}t + \sigma \mathrm{d}W(t), \quad x(0)=x_0,
\end{equation}
where $\beta \in \real$ and $\sigma  \in \real_{>0}$ are, respectively, the drift rate and the diffusion rate, $W(t)$ is the standard one-dimensional Weiner process, $x(t)$ is the aggregate evidence at time $t$, and $x_0$ is the initial evidence (see~\cite{RB-EB-etal:06} for the details of the model). 
{The two decision hypotheses correspond to the drift rate being positive or negative, i.e., $\beta \in \real_{>0}$ or $\beta \in \real_{<0}$, respectively.}

Human decision-making  is studied  in two paradigms, namely, \emph{interrogation} and \emph{free-response}. In the interrogation paradigm, a time duration is prescribed to the human who decides on an alternative at the end of this duration. In the model for the interrogation paradigm, by the end of the prescribed duration, the human compares the aggregated evidence against a single threshold, and chooses an alternative.
In the free response paradigm, the human subject is free to take as much time as needed to make a reliable decision. In the model for this paradigm, 
%the human compares the aggregated evidence against two thresholds and decides on an alternative only if the associated threshold is crossed. In particular,  
at  each time $\tau \in \real_{\ge 0}$, the human compares the aggregated evidence against two symmetrically chosen thresholds $\pm \eta, \eta \in \real_{\ge 0}$: (i) if $x(\tau) \ge  \eta$, then the human decides in favor of the first alternative; (ii) if  $x(\tau) \le -\eta$, then the human decides in favor of the second alternative; (iii) otherwise, the human collects more evidence. The DDM in the free response paradigm  is the continuum limit of the \emph{sequential probability ratio test}~\cite{AW:45} that requires a minimum expected number of observations to decide within a prescribed probability of error. 

{
A decision is said to be \emph{erroneous}, if it is in favor of an incorrect alternative. 
For the DDM and the free response decision-making paradigm, the \emph{error rate} is the probability of making an erroneous decision, and the \emph{decision time} is the time required to decide on an alternative.}  In particular, for $\beta \in \real_{> 0}$, the decision time $T$ is defined by
\[
T =\inf\setdef{t\in \real_{\ge 0}}{x(t) \in \{-\eta, +\eta\}},
\]
and the error rate $\mr{ER}$ is defined by $\mr{ER}=\prob(x(T)=-\eta)$. For the DDM~\eqref{eq:ddm} with thresholds $\pm \eta$, the expected decision time $\mr{ET}$ and the error rate $\mr{ER}$ are, respectively, given by~\cite{RB-EB-etal:06}
\begin{equation}\label{eq:ddm-et-er}
\mr{ET}=\frac{\eta}{\beta}\tanh\Big(\frac{\beta \eta}{\sigma^2}\Big)\quad \text{and}\quad \mr{ER} = \frac{1}{1+\exp\big(\frac{2\beta \eta}{\sigma^2}\big)}.
\end{equation}

%The \emph{first passage time} for a stochastic process and a given threshold is defined as the first time the process crosses a threshold.  In particular, for the DDM~\eqref{eq:ddm} and a given threshold $\eta$, the first passage time $\map{\supscr{T}{pt}}{\real}{\real_{\ge 0}}$ is defined by
%\[
%\supscr{T}{pt} (\eta)= \inf\setdef{t\in \real_{\ge 0}}{x(t) = \eta}.
%\]
%Moreover, for $\eta \in \real_{>0}$, the first passage time distribution is 
%\begin{equation}\label{eq:first-passage-ddm-1}
%\prob(\supscr{T}{pt}(\eta) \le t) = 1\!- \Phi\Big( \frac{\eta -\beta t}{\sigma \sqrt{t}}\Big)\! + \!e^{\frac{2\beta \eta}{\sigma^2}} \Phi\Big( \frac{-\eta -\beta t}{\sigma \sqrt{t}}\Big), 
%\end{equation}
%and for $\eta \in \real_{<0}$, the first passage time distribution is 
%\begin{equation}\label{eq:first-passage-ddm-2}
%\prob(\supscr{T}{pt}(\eta) \le t) =\! \Phi\Big( \frac{\eta -\beta t}{\sigma \sqrt{t}}\Big)\! + e^{-\frac{2\beta \eta}{\sigma^2}}\! \Big(1 - \Phi\Big( \frac{-\eta -\beta t}{\sigma \sqrt{t}}\Big)\!\Big),
%\end{equation}
%where $\Phi(\cdot)$ is the cumulative distribution function for the standard normal random variable.

\subsection{Coupled drift diffusion model}
Consider a set of $n$ decision-makers performing a two alternative choice task and let their interaction topology be modeled by a connected undirected graph $\mc G$ with Laplacian matrix $L \in \real^{n\times n}$. The evidence aggregation in collective decision-making is modeled in the following way.
 At each time $t\in\real_{\ge 0}$, every decision-maker $k\in \until{n}$ (i) computes a convex combination of her evidence with her neighbor's evidence; (ii) collects new evidence; and (iii) adds the new evidence to the convex combination. This collective evidence aggregation process is mathematically described by the following coupled drift diffusion model~\cite{IP-LS-NEL:12}:
\begin{equation}\label{coupled-ddm}
\mr d \bs x(t)= \big( \beta \bs 1_n  -L \bs x(t) \big) \mathrm{d}t + \sigma \mc I_n \mathrm{d}\bs W_n(t),
\end{equation}
where $\bs x(t) \in \real^n$ is the vector of aggregate evidence at time $t$, %$\bs b=\beta \bs 1_n$ is the vector of drift rates for each decision maker, 
%$H= \sigma^2 \mc I_n$, 
$\bs W_n(t) \in \real^n$ is the standard $n$-dimensional vector of Weiner processes, 
$\bs 1_n$ is the column $n$-vector of all ones, and $\mc I_n$ is the identity matrix of order $n$.  
{The two decision hypotheses correspond to the drift rate being positive or negative, i.e., $\beta \in \real_{>0}$ or $\beta \in \real_{<0}$, respectively.}

The coupled DDM~\eqref{coupled-ddm} captures the interaction among individuals using the Laplacian flow dynamics.  { The Laplacian flow is the continuous time equivalent of the classical DeGroot model~\cite{MOJ:10,MHDG:74}, which is a popular model for learning in social networks~\cite{BG-MOJ:10}. 
However, the social network literature employs the DeGroot model to reach a consensus on the belief of each individual~\cite{AJ-AS-ATS:10}, while the coupled DDM employs the Laplacian flow to achieve a consensus on the evidence available to each individual. 
The coupled DDM~\eqref{coupled-ddm} is the continuous time equivalent of the running consensus algorithm~\cite{PB-SM-VM:08} with a fixed interaction topology.}

%Learning in social networks is a topic of increasing research~\cite{MOJ:10, BG-MOJ:10, DA-AO-AP:10, AJ-AS-ATS:10}. The classical consensus seeking DeGroot model~\cite{MHDG:74} has been used by Golub~and~Jackson~\cite{BG-MOJ:10} to explain na{\"i}ve learning in  social networks. Chandrasekhar~\etal~\cite{AC-HL-JX:12} have noted that the DeGroot model explains the social learning data better than other models. One drawback of the DeGroot model is that does not capture the arrival of new information over time. Jadbabaie~\etal~\cite{AJ-AS-ATS:10} extend the DeGroot model to allow for the arrival of new information over time. 
%These social learning models capture the information assimilation across the network well, however, in many scenarios, the quantity of interest is the decisions made by individuals in the network based on the information available to them. To this end, we incorporate cognitive models of human decision-making on top of the above information assimilation models. 
%The information aggregation model considered in this paper is the continuous time equivalent of the DeGroot model. Using cognitive models for human decision-making in two-alternative choice tasks, we study the translation of the assimilated information into decisions in  social networks. 

The solution to the stochastic differential equation (SDE)~\eqref{coupled-ddm} is a Gaussian process, and for $\bs x(0) =\bs 0_n$, where $\bs 0_n$ is the column $n$-vector of all zeros, 
\begin{equation} \label{eq:evidence}
\begin{split}
\expt[x(t)] &= \beta t \bs 1_n, \\
\text{Cov}(x_k(t), x_j(t)) &= \frac{\sigma^2 t}{n} +\sigma^2 \sum_{p=2}^n \! \frac{1-e^{-2 \lambda_p t}}{2 \lambda_p}u_k^{(p)} u_j^{(p)},
\end{split}
\end{equation}
for $k,j \in \until{n}$, where $\lambda_p, \; p\in \{2,\ldots,n\}$,  are non-zero eigenvalues of the Laplacian matrix, and $u_k^{(p)}$ is the $k$-th component of the normalized eigenvector associated with eigenvalue $\lambda_p$ (see~\cite{IP-LS-NEL:12} for details). 

%\begin{remark}[\bit{DeGroot model}]
% \oprocend
%\end{remark}

\begin{assumption}[\bit{Unity diffusion rate}]
In the following, without loss of generality, we assume the diffusion rate $\sigma =1$ in the coupled DDM~\eqref{coupled-ddm}.
Note that if the diffusion rate is non-unity, then it can be made unity by scaling the drift rate, the initial condition, and thresholds by $1/\sigma$.
 \oprocend
\end{assumption}

\begin{remark}[\bit{Ideal network as generalized ideal group}]
 In contrast to the standard ideal group analysis~\cite{RDS-CJH-RW:01} that assumes each individual interacts with every other individual, in~\eqref{coupled-ddm} each individual interacts only with its neighbors in the interaction graph $\mc G$. Thus, the coupled DDM~\eqref{coupled-ddm} generalizes the ideal group model and captures more general interactions, e.g., organizational hierarchies.  We refer to this decision-making system as an ideal network.  \oprocend
\end{remark}

\section{Properties of the Coupled DDM}\label{sec:prop-coupled-ddm}
In this section, we study properties of the coupled DDM. 
We first present the principal component analysis of the coupled DDM.
We then show that the coupled DDM is an asymptotically optimal decision-making model. 
We utilize the principal component analysis to decompose the coupled DDM into a centralized DDM and a coupled error dynamics. We then develop decoupled approximations of the error dynamics. 

\subsection{Principal component analysis of the coupled DDM}
In this section, we study the principal components of the coupled DDM. It follows from~\eqref{eq:evidence} that, for the coupled DDM~\eqref{coupled-ddm}, the covariance matrix of the evidence at time $t$ is
\begin{equation}\label{eq:covariance}
\text{Cov}(\bs x(t)) = \frac{t}{n} \bs 1_n \bs 1_n^{\top} + \sum_{p=2}^n \frac{1-e^{-2\lambda_p t}}{2 \lambda_p} \bs u_p \bs u_p^{\top},
\end{equation}
where $\bs u_p \in \real^n$ is the eigenvector of the Laplacian matrix $L$ corresponding to eigenvalue $\lambda_p \in \real_{>0}$.  Since
\[
 \frac{1-e^{-2\lambda_p t}}{2 \lambda_p} < t, \text{ for each } p \in \{2,\ldots,n\},
\]
it follows that the first principal component of the coupled DDM corresponds to the eigenvector $\bs 1_n /\sqrt{n}$, i.e., the first principle component  is a set of
%
%Therefore, the first principal component of the coupled DDM corresponds to 
identical DDMs, each of which is the average of the individual DDMs.
%Thus the first principal component of the coupled DDM corresponds to the following DDM, that we refer to as the \emph{centralized DDM}, at each node:
Such an averaged DDM, referred to as \emph{the centralized DDM},  is described as follows: 
\begin{equation}\label{eq:centralized-ddm}
\mr d \subscr{x}{cen}(t) = \beta \mr d t + \frac{1}{n}\bs 1_n^{\top} \mr d \bs W(t), \; \subscr{x}{cen}(0)=0.
\end{equation}
Other principal components correspond to the remaining component $\bs x(t) -\subscr{x}{cen}(t) \bs 1_n$ of the evidence that we define as the error vector $\bs \epsilon(t) \in \real^{n}$. It follows immediately that the error dynamics are
\begin{equation}\label{eq:error-dynamics}
\! \mr d \bs \epsilon (t) = - L \bs \epsilon(t) \mr dt +  (I_n - \frac{1}{n} \bs 1_n \bs 1_n^{\top}) \mr d \bs W_n(t),\bs \epsilon(0) =\bs 0_n.
\end{equation}

We summarize the above discussion in the following proposition.
\begin{proposition}[\bit{Principal components of the coupled DDM}]\label{prop:principal-components}
The coupled DDM~\eqref{coupled-ddm} can be decomposed into the centralized DDM~\eqref{eq:centralized-ddm} at each node and the error dynamics~\eqref{eq:error-dynamics}. Moreover, the centralized DDM is the first principal component of the coupled DDM and the error dynamics correspond to the remaining principal components. 
\end{proposition}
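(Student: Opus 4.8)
The plan is to verify the decomposition directly from the defining SDE~\eqref{coupled-ddm} and then read off the principal-component statement from the covariance formula~\eqref{eq:covariance}. First I would left-multiply~\eqref{coupled-ddm} by $\frac{1}{n}\bs 1_n^{\top}$. Since $\mc G$ is undirected, $\bs 1_n^{\top} L = \bs 0_n^{\top}$, so the Laplacian term drops out and the averaged evidence $\bar x(t) := \frac{1}{n}\bs 1_n^{\top}\bs x(t)$ obeys $\mr d\bar x(t) = \beta\, \mr dt + \frac{1}{n}\bs 1_n^{\top}\mr d\bs W_n(t)$ with $\bar x(0)=0$; this is precisely the centralized DDM~\eqref{eq:centralized-ddm}, so $\bar x = \subscr{x}{cen}$ and the common centralized component at every node is $\subscr{x}{cen}(t)\bs 1_n$.

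Next I would set $\bs\epsilon(t) := \bs x(t) - \subscr{x}{cen}(t)\bs 1_n$ and compute $\mr d\bs\epsilon(t) = \mr d\bs x(t) - \bs 1_n\,\mr d\subscr{x}{cen}(t)$. The drift contributions $\beta\bs 1_n\,\mr dt$ cancel; using $L\bs 1_n = \bs 0_n$ gives $L\bs x(t) = L\bs\epsilon(t)$; and the noise term becomes $(I_n - \frac{1}{n}\bs 1_n\bs 1_n^{\top})\mr d\bs W_n(t)$ once the unity-diffusion normalization is applied. This reproduces~\eqref{eq:error-dynamics} with $\bs\epsilon(0) = \bs 0_n$, establishing the claimed decomposition of~\eqref{coupled-ddm}.

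For the principal-component claim, I would observe that~\eqref{eq:covariance} is already the spectral decomposition of $\text{Cov}(\bs x(t))$: since $L$ is symmetric, $\{\bs 1_n/\sqrt n\} \cup \{\bs u_p\}_{p=2}^{n}$ is an orthonormal eigenbasis of $\real^n$, the direction $\bs 1_n/\sqrt n$ carries eigenvalue $t$ (rewriting $\frac{t}{n}\bs 1_n\bs 1_n^{\top} = t\,(\bs 1_n/\sqrt n)(\bs 1_n/\sqrt n)^{\top}$), and $\bs u_p$ carries eigenvalue $(1-e^{-2\lambda_p t})/(2\lambda_p)$. The elementary inequality $1-e^{-s} < s$ for $s>0$ gives $(1-e^{-2\lambda_p t})/(2\lambda_p) < t$ for every $p\ge 2$ and $t>0$, so the largest eigenvalue is $t$ with eigenvector $\bs 1_n/\sqrt n$; hence the first principal component points along $\bs 1_n$, the direction of $\subscr{x}{cen}(t)\bs 1_n$. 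Finally, $\bs 1_n^{\top}\bs\epsilon(t) = \bs 1_n^{\top}\bs x(t) - n\subscr{x}{cen}(t) = 0$, so the error vector lives in $\operatorname{span}\{\bs 1_n\}^{\perp} = \operatorname{span}\{\bs u_2,\dots,\bs u_n\}$, which is spanned precisely by the remaining principal components.

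I do not anticipate a genuine obstacle: the argument is a direct It\^o computation plus one scalar inequality. The only points requiring a little care are the bookkeeping with the diffusion normalization ($\sigma = 1$ by assumption) so that the noise matrix in~\eqref{eq:error-dynamics} comes out exactly as written, and noting that~\eqref{eq:covariance} itself is inherited from~\cite{IP-LS-NEL:12} rather than re-derived here.
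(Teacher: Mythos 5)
Your proposal is correct and follows essentially the same route as the paper, whose ``proof'' is the discussion preceding the proposition: the spectral form of $\text{Cov}(\bs x(t))$ in~\eqref{eq:covariance} together with the inequality $(1-e^{-2\lambda_p t})/(2\lambda_p) < t$ identifies $\bs 1_n/\sqrt{n}$ as the first principal component, the averaged (centralized) DDM~\eqref{eq:centralized-ddm} is obtained by projecting onto $\bs 1_n$, and the error dynamics~\eqref{eq:error-dynamics} follow by subtraction. You merely make explicit the It\^o bookkeeping ($\bs 1_n^{\top}L=\bs 0_n^{\top}$, $L\bs x = L\bs\epsilon$) and the orthogonality of $\bs\epsilon(t)$ to $\bs 1_n$ that the paper treats as immediate.
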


\subsection{Asymptotic optimality of the coupled DDM}
The centralized DDM~\eqref{eq:centralized-ddm} is the DDM in which all evidence is available at a fusion center. 
It follows from the optimality of the DDM in the free response paradigm that the centralized DDM is also optimal in the free response paradigm. We will show that the coupled DDM is asymptotically equivalent to the centralized DDM and thus asymptotically optimal.

\begin{proposition}[\bit{Asymptotic optimality}] \label{prop:asymptotic-optimality}
The evidence $x_k(t)$ aggregated at each node $k\in\until{n}$ in the coupled DDM  is equivalent to the  evidence $\subscr{x}{cen}(t)$ aggregated  in the centralized DDM  as $t\to +\infty$. 
\end{proposition}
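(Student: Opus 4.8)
The plan is to build directly on the decomposition in Proposition~\ref{prop:principal-components}, which writes $\bs x(t) = \subscr{x}{cen}(t)\bs 1_n + \bs\epsilon(t)$, so that at each node $k$ we have $x_k(t) = \subscr{x}{cen}(t) + \epsilon_k(t)$. Asymptotic equivalence then reduces to showing that the error component $\epsilon_k(t)$ is asymptotically negligible compared with the diverging centralized evidence $\subscr{x}{cen}(t)$. The key structural fact to exploit is that $\bs\epsilon$ never leaves the subspace $\bs 1_n^{\perp}$: since $\bs\epsilon(0)=\bs 0_n$, $L\bs 1_n = \bs 0_n$, and $I_n - \tfrac1n\bs 1_n\bs 1_n^{\top}$ projects onto $\bs 1_n^{\perp}$, both the drift and the noise in~\eqref{eq:error-dynamics} stay in $\bs 1_n^{\perp}$, on which $L$ restricts to a positive definite map with smallest eigenvalue $\lambda_2 > 0$ (here connectivity of $\mc G$ enters).

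Consequently~\eqref{eq:error-dynamics} is a stable, zero-mean Ornstein–Uhlenbeck process. Its covariance is read off from~\eqref{eq:covariance}, either by noting $\bs\epsilon(t) = (I_n - \tfrac1n\bs 1_n\bs 1_n^{\top})\bs x(t)$ and applying the projector (which fixes the $\sum_{p\ge 2}$ terms, whose eigenvectors $\bs u_p$ lie in $\bs 1_n^{\perp}$, and annihilates the $\tfrac tn\bs 1_n\bs 1_n^{\top}$ term), or equivalently by independence of $\tfrac1n\bs 1_n^{\top}\bs W_n$ and $(I_n-\tfrac1n\bs 1_n\bs 1_n^{\top})\bs W_n$. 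Either way, $\text{Cov}(\bs\epsilon(t)) = \sum_{p=2}^n \frac{1-e^{-2\lambda_p t}}{2\lambda_p}\bs u_p\bs u_p^{\top}$, which increases monotonically to the finite limit $\Sigma_\infty := \sum_{p=2}^n \frac{1}{2\lambda_p}\bs u_p\bs u_p^{\top}$ as $t\to+\infty$. Hence $\expt[\epsilon_k(t)] = 0$ and $\text{Var}(\epsilon_k(t)) \le (\Sigma_\infty)_{kk} < \infty$ uniformly in $t$.

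On the other hand, $\subscr{x}{cen}(t)$ in~\eqref{eq:centralized-ddm} is Gaussian with mean $\beta t$ and variance $t/n$. Therefore $\expt[x_k(t)] = \expt[\subscr{x}{cen}(t)] = \beta t$ for every $t$, and $x_k(t) - \subscr{x}{cen}(t) = \epsilon_k(t)$ has zero mean and bounded variance, so it converges in distribution to $\mathcal{N}\big(0,(\Sigma_\infty)_{kk}\big)$. Since $\text{Var}(\subscr{x}{cen}(t)) = t/n \to \infty$ (and, for $\beta \neq 0$, $\subscr{x}{cen}(t) \to \pm\infty$ almost surely), it follows that $(x_k(t) - \subscr{x}{cen}(t))/\sqrt{t} \to 0$ in mean square and $x_k(t)/\subscr{x}{cen}(t) \to 1$ in probability. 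This is the precise sense of asymptotic equivalence; in particular, for large thresholds $\pm\eta$ the first-passage behavior of $x_k$ matches that of $\subscr{x}{cen}$ up to vanishing corrections, which links this statement to the optimality of the centralized DDM in the free response paradigm.

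The main obstacle — essentially the only nontrivial point — is establishing that the error process does not grow with $t$, i.e., that~\eqref{eq:error-dynamics} has a bounded stationary covariance. This rests entirely on connectedness of $\mc G$, which forces $\lambda_p > 0$ for all $p\ge 2$ and hence keeps $\sum_{p=2}^n \frac{1-e^{-2\lambda_p t}}{2\lambda_p}$ bounded uniformly in $t$; once this is in hand, the remainder is routine bookkeeping with the mean and covariance expressions~\eqref{eq:evidence}–\eqref{eq:covariance} already derived for the coupled DDM.
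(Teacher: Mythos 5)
Your proof is correct and follows essentially the same route as the paper: decompose $x_k(t)=\subscr{x}{cen}(t)+\epsilon_k(t)$ via Proposition~\ref{prop:principal-components}, use connectivity (so $\lambda_p>0$ for $p\ge 2$) to show $\mathrm{Var}(\epsilon_k(t))=\sum_{p=2}^n\frac{1-e^{-2\lambda_p t}}{2\lambda_p}(u_k^{(p)})^2$ stays bounded, and conclude that the error is negligible against the centralized DDM, whose fluctuations grow like $\sqrt{t}$. The only difference is in phrasing the conclusion: the paper states convergence in distribution of $(x_k(t)-\beta t)/\sqrt{t}$ to the corresponding centralized quantity (invoking an informal representation of $\epsilon_k$ as a scaled Wiener process), while you deduce mean-square convergence of $(x_k(t)-\subscr{x}{cen}(t))/\sqrt{t}$ to zero and convergence of the ratio in probability, which makes the same point in an arguably cleaner way.
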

\begin{proof}
We start by solving~\eqref{eq:error-dynamics}. Since~\eqref{eq:error-dynamics} is a linear SDE, the solution is a Gaussian process with $\expt[\bs \epsilon(t)] = 0$ and
\begin{align*}
\text{Cov}(\bs \epsilon(t))& = \int_{0}^t e^{-L(t-s)} (I_n - \frac{1}{n} \bs 1_n \bs 1_n^{\top}) e^{-L(t-s)} \mr ds\\
&=  \int_{0}^t  e^{-2 L s} \mr ds - \frac{ t}{n}\bs 1_n \bs 1_n^{\top}.
\end{align*}
Therefore, 
$ \displaystyle
\text{Var}(\epsilon_k(t)) =  \sum_{p=2}^n \frac{1-e^{-2\lambda_p t}}{2\lambda _p} {u_k^{(p)}}^2.
$

We further note that
\begin{align*}
\frac{x_k(t) -\beta t}{ \sqrt{t} } &=  \frac{\subscr{x}{cen}(t) -\beta t}{  \sqrt{t}} + \frac{\epsilon_k(t)}{ \sqrt{t}}\\
& = \frac{1}{\sqrt{n t} } W(t) + \frac{\sqrt{\sum_{p=2}^n \frac{1-e^{-2\lambda_p t}{u_k^{(p)}}^2}{2\lambda _p}}}{t} W(t) .
\end{align*}
Note that $\epsilon_k(t)$ can be written as a scaled Weiner process because it is an almost surely continuous Gaussian process. Therefore,{
\[
\frac{x_k(t) -\beta t}{ \sqrt{t} }  \to  \frac{1}{\sqrt{n t}} W(t)  = \frac{\subscr{x}{cen}(t)-\beta t}{\sqrt{t}},
\]
in distribution as $t\to+\infty$,} and the asymptotic equivalence follows. 
\end{proof}

{
\begin{remark}[\bit{Effectiveness of collective decision-making}]
In view of Proposition~\ref{prop:asymptotic-optimality}, for large thresholds, each node in the coupled DDM behaves like the centralized DDM. In the limit of large thresholds, it follows for~(\ref{eq:centralized-ddm}) from~\eqref{eq:ddm-et-er} that the expected decision time for each individual is approximately $\frac{\eta}{\beta}$, and the error rate is $\exp(-\frac{2\beta n \eta}{\sigma^2})$. Therefore, for a given large threshold, the expected decision-time is the same under collective decision-making and individual decision-making. However, the error rate decreases exponentially with increasing group size. \oprocend
\end{remark}
}

\begin{definition}[\bit{Node certainty index}]\label{def:node-certainty-index}
For the coupled DDM~\eqref{coupled-ddm} and node $k$, the node certainty index~\cite{IP-LS-NEL:12}, denoted by $\mu_k$, is defined as the inverse of the steady state error variance in~\eqref{eq:error-dynamics}, i.e.,
\[
\frac{1}{\mu_k} = \sum_{p=2}^n \frac{1}{2\lambda _p} {u_k^{(p)}}^2.
\]
\end{definition}

It has been shown in~\cite{IP-LS-NEL:12} that the node certainty index is equivalent to the \emph{information centrality}~\cite{KS-MZ:89} which is defined as the inverse of the harmonic mean of the \emph{effective} path lengths from the given node to every other node in the interaction graph. Furthermore, it can be verified that $\mu_k \ge {2n \lambda_{2\text{-min}}}/{(n-1)}$, where $\lambda_{2\text{-min}}$ is the smallest positive eigenvalue of the Laplacian matrix associated with the interaction graph. 

\subsection{Decoupled approximation to the error dynamics}
We examine the free response paradigm for the coupled DDM, which corresponds to 
the boundary crossing of the $n$-dimensional Weiner process with respect to the thresholds $\pm \eta$. In general, for $n>1$, boundary crossing properties of the Weiner process are hard to characterize analytically. Therefore, we resort to approximations for the coupled DDM.  In particular, we are interested in mean-field type approximations~\cite{DK-NF:09}
that reduce a coupled system with $n$ components to a system with $n$ decoupled components. 

We note that  the error at node $k$ is a Gaussian process with zero mean and steady state variance $1/\mu_k$. In order to approximate the coupled DDM with $n$ decoupled systems, we approximate the error dynamics at node $k$ by the following Ornstein-Uhlenbeck (O-U) process
\begin{equation}\label{eq:error-o-u}
\mr d \varepsilon_k(t) = -\frac{\mu_k}{2} \varepsilon_k(t) + \mr d W(t), \quad \varepsilon_k(0)=0,
\end{equation}
for each $k\in \until{n}$. Note that different nodes will have different realizations of the 
Weiner process $W(t)$ in~\eqref{eq:error-o-u}; however, for simplicity of notation, we do not use the index $k$ in $W(t)$. 
We now study the efficiency of such an approximation. We first introduce some notation. Let $\bs \mu \in \real_{>0}^n$ be the vector of $\mu_k, k\in \until{n}$. Let $\text{diag}(\cdot)$ represent the diagonal matrix with its argument as the diagonal entries. Let $\tilde \lambda_p$ be the $p$-th eigenvalue of $L+\text{diag}(\bs \mu/2)$ and let ${\tilde {\bs u}}^{(p)}$ be the associated eigenvector.

\begin{proposition}[\bit{Efficiency of the error approximation}]\label{prop:efficiency}
For the coupled error dynamics~\eqref{eq:error-dynamics} and the decoupled approximate error dynamics~\eqref{eq:error-o-u}, the following statements hold:
\begin{enumerate}
\item the expected error $\expt[\epsilon_k(t)]$ and  $\expt[\varepsilon_k(t)]$ are zero uniformly in time;
\item the error variances $\expt[\epsilon_k(t)^2]$ and $\expt[\varepsilon_k(t)^2]$   converge exponentially to $\frac{1}{\mu_k}$;
\item the steady state correlation between $\epsilon_k(t)$ and $\varepsilon_k(t)$ is
\[
\lim_{t\to +\infty} \text{corr}(\epsilon_k(t), \varepsilon_k(t)) = \mu_k \sum_{p=1}^n \frac{1}{2\tilde \lambda_p} ({\tilde u}_k^{(p)})^2 -\frac{2}{n}.
\]
\end{enumerate}
\end{proposition}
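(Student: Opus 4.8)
The plan is to use that both~\eqref{eq:error-dynamics} and~\eqref{eq:error-o-u} are linear SDEs driven by Brownian motion with deterministic (zero) initial data, so that $\bs\epsilon(t)$ and $\varepsilon_k(t)$ are jointly Gaussian and the entire statement reduces to tracking first and second moments. For (i), taking expectations in~\eqref{eq:error-dynamics} and~\eqref{eq:error-o-u} and using the zero initial conditions gives $\expt[\bs\epsilon(t)]\equiv\bs 0_n$ and $\expt[\varepsilon_k(t)]\equiv 0$, which is the claim.

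For (ii), I would reuse the variance computation already carried out in the proof of Proposition~\ref{prop:asymptotic-optimality}, namely $\text{Var}(\epsilon_k(t))=\sum_{p=2}^n\frac{1-e^{-2\lambda_p t}}{2\lambda_p}(u_k^{(p)})^2$; this converges to $\sum_{p=2}^n\frac{(u_k^{(p)})^2}{2\lambda_p}=1/\mu_k$ by Definition~\ref{def:node-certainty-index}, with remainder bounded by a multiple of $e^{-2\lambda_{2\text{-min}}t}$, hence exponentially. For the scalar Ornstein--Uhlenbeck process~\eqref{eq:error-o-u}, applying It\^o's formula to $\varepsilon_k^2$ and taking expectations gives $\dot v_k=-\mu_k v_k+1$, $v_k(0)=0$, so $\text{Var}(\varepsilon_k(t))=\tfrac1{\mu_k}\big(1-e^{-\mu_k t}\big)\to 1/\mu_k$ exponentially; this also shows that the unit-diffusion normalization in~\eqref{eq:error-o-u} is precisely the one that makes the two stationary variances agree.

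The substance is (iii). I would couple the two processes in the natural way, driving $\varepsilon_k$ by the $k$-th coordinate of the same vector Wiener process $\bs W_n$ appearing in~\eqref{eq:error-dynamics}; since a correlation is scale invariant, this pairing is canonical irrespective of the diffusion normalization used for $\varepsilon_k$. Writing the solutions as stochastic integrals, $\epsilon_k(t)=\bs e_k^\top\int_0^t e^{-L(t-s)}\big(I_n-\tfrac1n\bs 1_n\bs 1_n^\top\big)\,\mr d\bs W_n(s)$ and $\varepsilon_k(t)=\int_0^t e^{-\frac{\mu_k}{2}(t-s)}\bs e_k^\top\,\mr d\bs W_n(s)$, the It\^o isometry reduces the cross-covariance to the deterministic integral
\[
\text{Cov}\big(\epsilon_k(t),\varepsilon_k(t)\big)=\int_0^t e^{-\frac{\mu_k}{2}(t-s)}\,\bs e_k^\top e^{-L(t-s)}\big(I_n-\tfrac1n\bs 1_n\bs 1_n^\top\big)\bs e_k\,\mr ds .
\]
Equivalently, the stationary cross-covariance matrix of the joint Gaussian system solves the Sylvester equation $L\,\Sigma_{\epsilon\varepsilon}+\Sigma_{\epsilon\varepsilon}\,\text{diag}(\bs\mu/2)=I_n-\tfrac1n\bs 1_n\bs 1_n^\top$. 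I would then let $t\to+\infty$ and evaluate the limit by diagonalizing in the orthonormal eigenbasis $\{\tilde{\bs u}^{(p)}\}$ of $L+\text{diag}(\bs\mu/2)$, separating off the consensus direction $\bs 1_n/\sqrt n$ with which the projector $I_n-\tfrac1n\bs 1_n\bs 1_n^\top$ interacts; each mode then contributes an elementary integral $\int_0^\infty e^{-(\cdot)\tau}\,\mr d\tau$. Dividing by the product of the two stationary standard deviations $\mu_k^{-1/2}\cdot\mu_k^{-1/2}$ from part (ii) produces the prefactor $\mu_k$, while the $\bs 1_n$-direction contributes the additive constant $-2/n$, and together these give the stated correlation.

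The cross-covariance evaluation is where I expect the difficulty to be concentrated. The two mean-reversion operators --- the Laplacian $L$ governing $\bs\epsilon$ and the diagonal $\text{diag}(\bs\mu/2)$ governing $\bs\varepsilon$ --- do not commute, so the Sylvester equation cannot simply be inverted mode by mode in a single eigenbasis; one must work from the integral representation with simultaneous spectral bookkeeping, and be careful to extract the $\bs 1_n$-direction as the additive term $-2/n$ rather than have it absorbed into the normalization. Parts (i) and (ii) are routine by comparison.
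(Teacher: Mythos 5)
Your parts (i) and (ii) are correct and essentially reproduce the paper's argument: the paper likewise forms the joint linear SDE for $(\bs\epsilon,\bs\varepsilon)$ driven by the \emph{same} $\bs W_n$ (your coupling of $\varepsilon_k$ to the $k$-th coordinate of $\bs W_n$ is exactly its $I_n\,\mr d\bs W_n$ block) and reads off the means and variances; your It\^o-isometry expression for the cross-covariance is also the same starting point as the paper's covariance integral.

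The gap is in part (iii), precisely where you yourself say the difficulty is concentrated: the evaluation of that integral is asserted, not carried out, and the sketch you give does not deliver the stated formula. Note that your Sylvester equation \emph{can} be solved exactly for the $k$-th column, because $\text{diag}(\bs\mu/2)$ acts on that column as the scalar $\mu_k/2$; expanding in the eigenbasis of $L$ (which commutes with the projector $I_n-\tfrac1n\bs 1_n\bs 1_n^\top$) gives
\[
\lim_{t\to+\infty}\expt[\epsilon_k(t)\varepsilon_k(t)]
=\int_0^{\infty}e^{-\frac{\mu_k}{2}u}\Big[e^{-Lu}\Big(I_n-\tfrac1n\bs 1_n\bs 1_n^{\top}\Big)\Big]_{kk}\,\mr du
=\sum_{p=2}^n\frac{(u_k^{(p)})^2}{\lambda_p+\mu_k/2},
\]
an expression in the eigen-data of $L$ alone, in which the consensus direction is annihilated by the projector and no separate $-2/n$ term appears. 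To arrive instead at the proposition's expression in terms of $(\tilde\lambda_p,\tilde{\bs u}^{(p)})$ one must replace $e^{-Lu}e^{-\text{diag}(\bs\mu/2)u}$ by $e^{-(L+\text{diag}(\bs\mu/2))u}$, which is valid only if $L$ and $\text{diag}(\bs\mu/2)$ commute---the very obstruction you flag. Your proposed remedy, diagonalizing in the eigenbasis of $L+\text{diag}(\bs\mu/2)$ so that ``each mode contributes an elementary integral,'' does not circumvent it: the product of the two semigroups does not act mode-by-mode in that basis, so the step that produces the $\tilde\lambda_p$ sum and the additive $-2/n$ is missing from your argument. (This is exactly the step the paper compresses into ``some algebraic manipulations'' of its combined covariance integral, where the commuting-exponentials identity is used implicitly.) As written, your proof of (iii) sets up the correct object but stops exactly where the content of the statement lies; a complete version must either justify the exchange of exponentials, quantify the error it introduces, or obtain the stated $(\tilde\lambda_p,\tilde{\bs u}^{(p)})$ form by a different exact computation.
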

\begin{proof}
The combined error and approximate error dynamics are
\begin{equation}\label{eq:combined-error-dynamics}
\mr d \begin{bmatrix} \bs \epsilon(t) \\ \bs {\varepsilon}(t) \end{bmatrix}
= -\begin{bmatrix} L \bs \epsilon(t) \\ \text{diag}(\frac{\bs \mu}{2}) \bs {\varepsilon}(t) \end{bmatrix} \mr dt
+ \begin{bmatrix} I_n -\frac{1}{n} \bs 1_n \bs 1_n^{\top} \\ I_n \end{bmatrix} \mr d \bs W_n(t),
\end{equation}
where $\bs {\varepsilon}(t)$ is the vector of $\varepsilon_k(t), k\in \until{n}$.

The combined error dynamics~\eqref{eq:combined-error-dynamics} is a linear SDE that can be solved in closed form to obtain
\[
\expt\begin{bmatrix} \bs \epsilon(t) \\ \bs {\varepsilon}(t) \end{bmatrix} = \exp\big(-\big[\begin{smallmatrix} L & 0\\ 0& \text{diag}(\frac{\bs \mu}{2})\end{smallmatrix}\big] t\big)  \begin{bmatrix} \bs \epsilon(0) \\ \bs {\varepsilon}(0) \end{bmatrix} = \begin{bmatrix} \bs 0_n \\ \bs 0_n \end{bmatrix}.
\]
This establishes the first statement. 

We further note that the covariance of $\big[\begin{smallmatrix} \bs \epsilon(t) \\ \bs {\varepsilon}(t) \end{smallmatrix}\big]$ is 
\begin{multline}\label{eq:combined-error-covariance}
\text{Cov} \big(\big[\begin{smallmatrix} \bs \epsilon(t) \\ \bs {\varepsilon}(t) \end{smallmatrix}\big]\big)
= \int_{0}^t  \exp\big(-\big[\begin{smallmatrix} L & 0\\ 0 & \text{diag}(\frac{\bs \mu}{2})\end{smallmatrix}\big] s \big)
\big[\begin{smallmatrix} I_n -\frac{1}{n} \bs 1_n \bs 1_n^{\top} \\ I_n \end{smallmatrix}\big]\\
\big[\begin{smallmatrix} I_n -\frac{1}{n} \bs 1_n \bs 1_n^{\top} & I_n \end{smallmatrix}\big] 
\exp\big(-\big[\begin{smallmatrix} L & 0 \\0 & \text{diag}(\frac{\bs \mu}{2})\end{smallmatrix}\big] s \big) \mr d s.
\end{multline}
Some algebraic manipulations reduce~\eqref{eq:combined-error-covariance} to
\begin{align*}
\text{Cov}(\bs \epsilon(t))& = \int_{0}^t e^{-2 L s} \mr d s - \frac{t}{n} \bs 1_n \bs 1_n^{\top},\\
\text{Cov}(\varepsilon_k(t), \varepsilon_k(t)) &= \frac{1-e^{-\mu_k t}}{\mu_k}, \quad \text{and}\\
\expt[\bs \epsilon(t) \bs {\varepsilon}(t)^{\top}] &= \int_{0}^t  \big( e^{-(L + \frac{\text{diag}(\bs \mu)}{2})s}  -\frac{1}{n} \bs 1 \bs 1_n^{\top} e^{-\frac{\text{diag}(\bs \mu)}{2} s} \big) \mr ds.
\end{align*}
It immediately follows that the steady state variance of $\epsilon_k$ and $\varepsilon_k$ is $1/\mu_k$. This establishes the second statement. 

To establish the third statement, we simplify the expression for $\expt[\bs \epsilon(t) \bs {\varepsilon}(t)^{\top}] $ to obtain
\[
\expt[\epsilon_k(t) \varepsilon_k(t)] =\sum_{p=1}^n \frac{1-e^{-2\tilde \lambda_p t}}{2\tilde \lambda_p} ({{\tilde u}_k}^{(p)})^2 - \frac{2(1- e^{-\frac{\mu_k t}{2}})}{n \mu_k}.
\]
Thus, the steady state correlation is
\begin{align*}
%\lim_{t\to+\infty}\text{corr} (\epsilon_k(t), \varepsilon_k(t))& =
 \lim_{t\to+\infty} 
\frac{\expt[\epsilon_k(t) \varepsilon_k(t)] }{\sqrt{\expt[\epsilon_k(t)^2] \expt[{\varepsilon_k(t)}^2] }}
&= \mu_k \sum_{p=1}^n \frac{1}{2\tilde \lambda_p} ({\tilde u}_k^{(p)})^2 -\frac{2}{n},
\end{align*}
and this establishes the proposition.
\end{proof}

\begin{remark}[\bit{Efficiency of the error approximation}]
For a large well connected network, the matrix $L+\frac{\text{diag}(\bs \mu)}{2}$ will be dominated by $\frac{\text{diag}(\bs \mu)}{2}$ and accordingly its eigenvectors will be close to the standard basis in $\real^n$. Thus, the steady state correlation between $\epsilon_k$ and $\varepsilon_k$ will be approximately $1-\frac{2}{n} \approx 1$, and the error approximation will be fairly accurate. \oprocend
\end{remark}

\section{Reduced DDM: Free Response Paradigm} \label{sec:reduced-ddm-free-response}

In this section we use the O-U process based error approximation~\eqref{eq:error-o-u} to develop an approximate information aggregation model for each node that we henceforth refer to as the \emph{reduced DDM}. We then present partial differential equations for the decision time and the error rate for the reduced DDM. Finally, we derive high probability bounds on the error rate and the expected decision time for the reduced DDM.  We study the free response paradigm under the following assumption:
\begin{assumption}[\bit{Persistent Evidence Aggregation}]
Each decision-maker continues to aggregate and communicate evidence according to the coupled DDM~\eqref{coupled-ddm} even after reaching a decision. \oprocend
\end{assumption}

\subsection{The reduced DDM}
We utilize the approximate error dynamics~\eqref{eq:error-o-u} to define  the reduced DDM at each node. 
The reduced DDM at node $k$ at time $t$ computes the evidence $y_k(t)$ by adding the approximate error $\varepsilon_k(t)$ to the evidence $\subscr{x}{cen}(t)$ aggregated by the centralized DDM. Accordingly, the reduced DDM at node $k$ is modeled by the following SDE:
\begin{equation}\label{eq:reduced-ddm}
\begin{bmatrix} \mr d  y_k(t) \\ \mr d  \varepsilon_k(t) \end{bmatrix}
= \begin{bmatrix}\beta  -\frac{\mu_k \varepsilon_k(t)}{2}  \\ -\frac{\mu_k  \varepsilon_k(t) }{2} \end{bmatrix} \mr d t 
+  \begin{bmatrix} \frac{1}{\sqrt{n}} & 1 \\ 0 & 1 \end{bmatrix} 
\begin{bmatrix} \mr d  W_1(t) \\ \mr d  W_2(t) \end{bmatrix},
\end{equation}
where $W_1(t)$ and $W_2(t)$ are independent standard one-dimensional Weiner processes. 

In the free response paradigm, 
decision-maker $k$ makes a  decision whenever $y_k(t)$ crosses one of the thresholds $\pm \eta_k$ for the first time.   If $\beta \in \real_{>0}$ and  $y_k(t)$ crosses threshold $+\eta_k (-\eta_k)$, then the decision is made in favor of the correct (incorrect) alternative. { Note that even though each individual in the network is identical, the model allows for them to have different thresholds. For simplicity, we consider symmetric thresholds for each individual; however, the following analysis holds for asymmetric thresholds as well.}

\subsection{PDEs for the decision time and the error rate}

The error rate is the probability of deciding in favor of an incorrect alternative.   The decision time is the time required to decide on an alternative. If $\beta \in \real_{>0}$ ($\beta \in \real_{<0}$), then an erroneous decision is made if the evidence crosses the threshold $-\eta_k$ ($+\eta_k$) before crossing the threshold $+\eta_k$ ($-\eta_k$). 
Without loss of generality, we assume that $\beta \in \real_{>0}$. We denote the error rate and the decision time for the $k$-th individual by $\mr{ER}_k$ and $T_k$, respectively. We denote the expected decision time at node $k$ by $\mr{ET}_k$.

We now determine the error rate and the expected decision time for the free response paradigm associated with the reduced DDM~\eqref{eq:reduced-ddm}.  For an SDE of the form~\eqref{eq:reduced-ddm}, the error rate and the decision time are typically characterized by solving the first passage time problem for the associated Fokker-Planck equation~\cite{CG:09}. For a homogeneous Fokker-Planck equation, the mean first passage time and the probability to cross the incorrect boundary before crossing the correct boundary is characterized by PDEs with initial conditions as variables. We now recall such PDEs and for completeness, we also present a simple and direct  derivation of these PDEs that does not involve the Fokker-Plank equation.  

\begin{proposition}[\bit{PDEs for error rate and decision time}] \label{prop:er-dt-pdes}
For the reduced DDM~\eqref{eq:reduced-ddm} with arbitrary initial conditions $y_k(0)=y_k^0\in[-\eta_k, \eta_k]$ and $\varepsilon_k(0)=\varepsilon_k^0\in [-\bar \eta_k, \bar \eta_k],  \bar \eta_k \to +\infty$, the following statements hold:
\begin{enumerate}
\item the partial differential equation for the expected decision time is
\begin{multline*}%\label{eq:decision-time-pde}
\Big(\beta  -\frac{\mu_k  \varepsilon_k^0}{2}\Big) \frac{\partial \mr{ET}_k}{\partial y_k^0}   -\frac{\mu_k  \varepsilon_k^0}{2} \frac{\partial \mr{ET}_k}{\partial  \varepsilon_k^0}\\
+ \frac{1}{2} \Big( \frac{n+1}{n} \frac{\partial^2 \mr{ET}_k}{\partial {y_k^0}^2}+ 2 \frac{\partial^2 \mr{ET}_k}{\partial {y_k^0} \partial \varepsilon^0_k}+  \frac{\partial^2 \mr{ET}_k}{\partial {\varepsilon_k^0}^2}\Big)=-1,
\end{multline*}
with boundary conditions $\mr{ET}_k(\cdot, \pm \eta_k) =0$, and $\mr{ET}_k(\pm \bar \eta_k, \cdot) =0$; 
\item the partial differential equation for the error rate is
\begin{multline*}%\label{eq:error-rate-pde}
\Big(\beta  -\frac{\mu_k  \varepsilon_k^0}{2}\Big) \frac{\partial \mr{ER}_k}{\partial y_k^0}   -\frac{\mu_k  \varepsilon_k^0}{2} \frac{\partial \mr{ER}_k}{\partial  \varepsilon_k^0}\\
+ \frac{1}{2} \Big( \frac{n+1}{n} \frac{\partial^2 \mr{ER}_k}{\partial {y_k^0}^2}+ 2 \frac{\partial^2 \mr{ER}_k}{\partial {y_k^0} \partial \varepsilon^0_k}+  \frac{\partial^2 \mr{ER}_k}{\partial {\varepsilon_k^0}^2}\Big)=0,
\end{multline*}
with boundary conditions $\mr{ER}_k(\cdot, \eta_k)=0$, $\mr{ER}_k(\cdot, -\eta_k)=1$, $\mr{ER}_k(\bar \eta_k, \cdot)=0$, and $\mr{ER}_k(-\bar \eta_k, \cdot)=1$.
\end{enumerate}
\end{proposition}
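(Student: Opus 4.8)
The plan is to recognize the pair $(y_k(t),\varepsilon_k(t))$ defined by~\eqref{eq:reduced-ddm} as a time-homogeneous two-dimensional diffusion and to obtain the two PDEs as the backward Kolmogorov equations for, respectively, its mean exit time from and its exit-location probability on the rectangle $[-\eta_k,\eta_k]\times[-\bar\eta_k,\bar\eta_k]$. First I would record the infinitesimal generator of~\eqref{eq:reduced-ddm}: the drift is $b(y,\varepsilon)=\bigl(\beta-\tfrac{\mu_k\varepsilon}{2},\,-\tfrac{\mu_k\varepsilon}{2}\bigr)$ and the diffusion matrix is $\Sigma=\bigl[\begin{smallmatrix}1/\sqrt n & 1\\ 0 & 1\end{smallmatrix}\bigr]\bigl[\begin{smallmatrix}1/\sqrt n & 0\\ 1 & 1\end{smallmatrix}\bigr]=\bigl[\begin{smallmatrix}(n+1)/n & 1\\ 1 & 1\end{smallmatrix}\bigr]$, so that for smooth $f$, $\mathcal L f=\bigl(\beta-\tfrac{\mu_k\varepsilon}{2}\bigr)\partial_y f-\tfrac{\mu_k\varepsilon}{2}\partial_\varepsilon f+\tfrac12\bigl(\tfrac{n+1}{n}\partial_{yy}f+2\,\partial_{y\varepsilon}f+\partial_{\varepsilon\varepsilon}f\bigr)$. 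This is exactly the operator on the left-hand side of both statements, with the variables $(y,\varepsilon)$ playing the role of the initial data $(y_k^0,\varepsilon_k^0)$.

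Next, following the route that bypasses the Fokker--Planck equation, I would use the strong Markov property over an infinitesimal time step. Let $T_k$ be the first exit time of $(y_k,\varepsilon_k)$ from the rectangle. For interior initial data and small $h>0$, conditioning on the state at time $h$ gives $\mr{ET}_k(y_k^0,\varepsilon_k^0)=h+\expt[\mr{ET}_k(y_k(h),\varepsilon_k(h))]+o(h)$, the $o(h)$ absorbing the negligible contribution of $\{T_k<h\}$; expanding $\mr{ET}_k(y_k(h),\varepsilon_k(h))$ by It\^o's formula, taking expectations so that the It\^o integrals drop out, dividing by $h$ and letting $h\to 0^+$ yields $0=1+\mathcal L\,\mr{ET}_k$. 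Similarly, because the error event ``exit through the incorrect threshold'' is not affected by the motion on $[0,h]$ before exit, $\mr{ER}_k(y_k^0,\varepsilon_k^0)=\expt[\mr{ER}_k(y_k(h),\varepsilon_k(h))]+o(h)$, and the same It\^o expansion gives $\mathcal L\,\mr{ER}_k=0$. The boundary data are then immediate: starting on an exit boundary forces $T_k=0$, hence $\mr{ET}_k=0$; starting at $y_k^0=\eta_k$ (resp.\ $y_k^0=-\eta_k$) the process is absorbed at once at the correct (resp.\ incorrect) threshold, hence $\mr{ER}_k=0$ (resp.\ $1$); the values assigned at $\varepsilon_k^0=\pm\bar\eta_k$ are merely a convention that disappears once $\bar\eta_k\to+\infty$, since $\varepsilon_k$ has Gaussian marginals of variance at most $1/\mu_k$ and is therefore a.s.\ bounded on $[0,T_k]$, so the $\varepsilon$-boundary is hit with vanishing probability.

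The step I expect to demand the most care is justifying the It\^o/Dynkin manipulation: one needs $\mr{ET}_k$ and $\mr{ER}_k$ to be finite and regular enough (say $C^2$ in the interior and continuous up to the boundary) to serve as test functions in It\^o's formula, and $\expt[T_k]<\infty$ for the expansions to be meaningful. Finiteness of $\expt[T_k]$ can be argued from the ergodicity of the O-U component: $\varepsilon_k$ has zero mean, so $y_k(t)/t\to\beta>0$ a.s.\ and $y_k$ crosses $+\eta_k$ in a.s.\ finite time, and a Lyapunov or exponential-moment estimate upgrades this to a finite (indeed exponentially bounded) expectation, uniformly over initial data in the rectangle; this uniformity also controls the $o(h)$ and $\bar\eta_k\to+\infty$ remainders. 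Interior $C^2$ regularity is standard since $\Sigma$ is positive definite, making $\mathcal L$ uniformly elliptic with smooth coefficients; alternatively, one may sidestep the regularity question entirely by taking the classical solution $u$ of the stated PDE, applying It\^o's formula to $u(y_k(t\wedge T_k),\varepsilon_k(t\wedge T_k))$ (plus $t\wedge T_k$ in the decision-time case) together with optional stopping, and reading off $u(y_k^0,\varepsilon_k^0)=\expt[T_k]$ (resp.\ $=\mr{ER}_k$), which identifies $u$ with $\mr{ET}_k$ (resp.\ $\mr{ER}_k$).
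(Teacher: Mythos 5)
Your proposal is correct and follows essentially the same route as the paper: conditioning on the state after an infinitesimal time $h$ (equivalently, applying the generator of the two-dimensional diffusion~\eqref{eq:reduced-ddm}), Taylor/It\^o expanding, and letting $h\to 0^+$ to obtain the stated elliptic PDEs, with the boundary data read off from the definitions of the decision time and the error event. The additional care you devote to finiteness of $\expt[T_k]$, interior regularity, and the verification argument goes beyond what the paper records, but it does not change the approach.
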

\begin{proof}
We start with the first statement. Let the expected decision time for the reduced DDM with initial condition $(y_k^0, \varepsilon_k^0)$ be $\mr{ET}_k(y_k^0, \varepsilon_k^0)$. Consider the evolution of the reduced DDM over an infinitesimally small duration $h\in \real_{>0}$. Then
\begin{equation*}
\begin{bmatrix} \mr y_k(h)-y_k^0  \\ \mr  \varepsilon_k(h)- \varepsilon_k^0 \end{bmatrix}
= \begin{bmatrix}\beta  -\frac{\mu_k \varepsilon_k^0}{2}  \\ -\frac{\mu_k  \varepsilon_k^0}{2} \end{bmatrix} h
+  \begin{bmatrix} \frac{1}{\sqrt{n}} & 1 \\ 0 & 1 \end{bmatrix} \begin{bmatrix} W_1(h) \\ W_2(h) \end{bmatrix}.
\end{equation*}
By continuity of the trajectories of the reduced DDM it follows that $\mr{ET}_k(y_k^0, \varepsilon_k^0)= h+ \expt[ \mr{ET}_k(y_k(h), \varepsilon_k(h))]$, where  the expectation is over different realizations of 
$(y_k(h), \varepsilon_k(h))$. It follows from Taylor series expansion that
\begin{multline*}
\expt[ \mr{ET}_k(y_k(h), \varepsilon_k(h))] - \mr{ET}_k(y_k^0, \varepsilon_k^0) 
= \Big(\beta  -\frac{\mu_k  \varepsilon_k^0}{2}\Big) \frac{\partial \mr{ET}_k}{\partial y_k^0} h  \\ -\frac{\mu_k  \varepsilon_k^0}{2} \frac{\partial \mr{ET}_k}{\partial  \varepsilon_k^0}h 
 + \frac{1}{2} \Big( \frac{\partial^2 \mr{ET}_k}{\partial {y_k^0}^2} \expt[\frac{1}{n} W_1(h)^2 +W_2(h)^2]\\
 + 2 \frac{\partial^2 \mr{ET}_k}{\partial {y_k^0} \partial \varepsilon^0_k}\expt[W_2(h)^2]+  \frac{\partial^2 \mr{ET}_k}{\partial {\varepsilon_k^0}^2}W_2(h)^2\Big) +o(h^2),
\end{multline*}
where $o(h^2)$ represents terms of order $h^2$. Substituting $\mr{ET}_k(y_k^0, \varepsilon_k^0)= h+ \expt[ \mr{ET}_k(y_k(h), \varepsilon_k(h))]$, and $\expt[W_1(h)^2]=\expt[W_2(h)^2]=h$
 in the above expression, the PDE for the expected decision time follows. 
The boundary conditions follow from the definition of the decision time.
The PDE for the error rate follows similarly.
\end{proof}

The expected decision time and the error rate can be computed using the PDEs in Proposition~\ref{prop:er-dt-pdes}. These PDEs are nonlinear and to the best of our knowledge do not admit a closed form solution; however, they can be efficiently solved using numerical methods.

\subsection{Bounds on the expected decision time and the error rate}

In order to gain analytic insight into the behavior of the expected decision time and the error rate for the reduced DDM, we derive bounds on them that hold with high probability. We first recall the following result from~\cite{AGN-LMR-LS:85} on the first passage time density of the O-U process for large thresholds.
Let $\map{\subscr{T}{pass}^k}{\real}{\real_{\ge 0}}$ be the first passage time for the O-U process~\eqref{eq:error-o-u} as a function of the threshold. Moreover, let $\subscr{\bar T}{pass}^k$ denote the mean value of $\subscr{T}{pass}^k$.

\begin{lemma}[\bit{First passage time density for O-U process}]\label{lem-first-passage-density}
For the O-U process~\eqref{eq:error-o-u}, and a large threshold $\eta^{\varepsilon}_k \in \real_{>0}$, the first passage time density is
\[
f(\eta^{\varepsilon}_k, t | \varepsilon^0_k=0) = \frac{1}{\subscr{\bar{T}}{pass}^k (\eta^{\varepsilon}_k)} \exp\Big( -\frac{t}{\subscr{\bar{T}}{pass}^k(\eta^{\varepsilon}_k)}\Big),
\]
where  $\displaystyle 
\subscr{\bar{T}}{pass}^k(\eta^{\varepsilon}_k)= \frac{2}{\mu_k}\Big(\sqrt{\pi} \varphi\Big(\frac{\eta^{\varepsilon}_k\sqrt{\mu_k}}{\sqrt{2}} \Big) + \psi \Big(\frac{\eta^{\varepsilon}_k\sqrt{\mu_k}}{\sqrt{2}} \Big)\Big)$,
$\displaystyle
\varphi(z)=\int_{0}^z e^{\tau^2} \mr d\tau,\; \text{and } \psi(z) =\int_{0}^z \int_{0}^{\tau} e^{\tau^2} e^{-s^2} \mr ds \mr d \tau.
$
\end{lemma}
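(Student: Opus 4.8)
The plan is to obtain the statement from the classical theory of first passage times of the Ornstein--Uhlenbeck process, following~\cite{AGN-LMR-LS:85}, by separating two claims: (i) that the first passage time density to a constant level becomes a pure exponential as the level grows, and (ii) the closed form for its rate $1/\subscr{\bar T}{pass}^k$. Claim (ii) is essentially a computation, whereas claim (i) carries the real content.

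For (ii) I would derive the mean first passage time directly. Write $\mathcal{L} = \tfrac12 \partial_{xx} - \tfrac{\mu_k}{2} x \partial_x$ for the generator of the O-U process~\eqref{eq:error-o-u}. The mean first passage time $u(x)$ to the level $\eta^{\varepsilon}_k$ started at $x \le \eta^{\varepsilon}_k$ solves $\mathcal{L} u = -1$ with $u(\eta^{\varepsilon}_k) = 0$ and, since the process is positive recurrent so that $u$ grows only like $\log|x|$ as $x \to -\infty$, the natural-boundary condition $u'(x) e^{-\mu_k x^2/2} \to 0$ as $x \to -\infty$. Setting $v = u'$ reduces this to a first-order linear ODE; solving with integrating factor $e^{-\mu_k x^2/2}$ gives $v(x) = -2 e^{\mu_k x^2/2} \int_{-\infty}^x e^{-\mu_k s^2/2}\, ds$, whence
\[
\subscr{\bar T}{pass}^k(\eta^{\varepsilon}_k) = u(0) = 2 \int_0^{\eta^{\varepsilon}_k} e^{\mu_k y^2/2} \Big( \int_{-\infty}^y e^{-\mu_k s^2/2}\, ds \Big) \mathrm{d}y .
\]
Splitting $\int_{-\infty}^y = \int_{-\infty}^0 + \int_0^y$, using $\int_{-\infty}^0 e^{-\mu_k s^2/2}\,\mathrm{d}s = \tfrac12\sqrt{2\pi/\mu_k}$, and rescaling $y = \sqrt{2/\mu_k}\,\tau$, $s = \sqrt{2/\mu_k}\,\sigma$ turns the two pieces into multiples of $\varphi$ and $\psi$ evaluated at $\eta^{\varepsilon}_k\sqrt{\mu_k}/\sqrt 2$, which after collecting constants reproduces the stated expression (up to the normalizing constants recorded in~\cite{AGN-LMR-LS:85}).

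For (i) I would invoke the spectral argument behind~\cite{AGN-LMR-LS:85}. The first passage time density admits an eigenfunction expansion $f(\eta^{\varepsilon}_k, t \mid \varepsilon^0_k = 0) = \sum_{n \ge 1} c_n \lambda_n e^{-\lambda_n t}$, where $0 < \lambda_1 < \lambda_2 < \cdots$ are the eigenvalues of the O-U generator on $(-\infty, \eta^{\varepsilon}_k)$ with an absorbing condition at $\eta^{\varepsilon}_k$. As $\eta^{\varepsilon}_k \to +\infty$ the domain exhausts the line, on which the O-U operator has discrete spectrum $\{0, \mu_k/2, \mu_k, \dots\}$; hence $\lambda_1 \downarrow 0$ exponentially fast in $(\eta^{\varepsilon}_k)^2$ while $\lambda_2$ stays bounded away from $0$. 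Therefore, for large thresholds only the first mode survives on the $O(1/\lambda_1)$ time scale, $f \approx \lambda_1 e^{-\lambda_1 t}$; integrating this approximation against $1$ forces $c_1 \to 1$ and against $t$ gives $\subscr{\bar T}{pass}^k \to 1/\lambda_1$, so $f(\eta^{\varepsilon}_k, t \mid 0) \to \tfrac{1}{\subscr{\bar T}{pass}^k} \exp(-t/\subscr{\bar T}{pass}^k)$. A probabilistic alternative is a regeneration argument: before the rare excursion that reaches $\eta^{\varepsilon}_k$, the process returns to a neighborhood of $0$ many times, so the first passage time is asymptotically a geometric sum of i.i.d.\ cycle durations and converges, after normalization by its mean, to an exponential --- the metastability picture for reversible diffusions.

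The main obstacle is making (i) rigorous rather than (ii): (ii) is routine once the natural-boundary condition is identified, whereas (i) requires the spectral-gap separation $\liminf_{\eta^{\varepsilon}_k \to \infty} \lambda_2 > 0$ together with a tail bound on the eigenfunction series that is uniform in $t$. The cleanest route, consistent with the phrasing of the lemma, is simply to cite~\cite{AGN-LMR-LS:85} for the exponential trend and for the mean, and to include the short ODE computation above only for completeness, after checking that the reference's unit O-U process matches~\eqref{eq:error-o-u} under a time rescaling by $\mu_k/2$.
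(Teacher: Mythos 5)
The paper does not actually prove this lemma: it is recalled verbatim from~\cite{AGN-LMR-LS:85}, so your bottom line --- cite that reference for the exponential trend and for the mean --- is exactly what the paper does. Your reconstruction is also the right one in outline. For the mean, the generator/ODE computation with the natural-boundary condition at $-\infty$ is the standard route, and your intermediate formula $\subscr{\bar T}{pass}^k(\eta^{\varepsilon}_k)=2\int_0^{\eta^{\varepsilon}_k} e^{\mu_k y^2/2}\big(\int_{-\infty}^{y}e^{-\mu_k s^2/2}\,\mathrm{d}s\big)\mathrm{d}y$ is correct for~\eqref{eq:error-o-u}. For the exponential limit, the spectral-gap/regeneration picture is indeed the mechanism, and you correctly identify that making it rigorous (uniform control of the higher modes as the threshold grows) is the real content --- which is precisely what~\cite{AGN-LMR-LS:85} supplies, so deferring that part to the citation is consistent with the paper.

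The one concrete gap is the step you hedge on: ``collecting constants.'' Carrying out the substitution $y=\sqrt{2/\mu_k}\,\tau$, $s=\sqrt{2/\mu_k}\,\sigma$ in your integral gives $\subscr{\bar T}{pass}^k(\eta^{\varepsilon}_k)=\tfrac{2}{\mu_k}\big(\sqrt{\pi}\,\varphi(z)+2\,\psi(z)\big)$ with $z=\eta^{\varepsilon}_k\sqrt{\mu_k}/\sqrt{2}$; in particular the second piece evaluates to $2\int_0^{\eta^{\varepsilon}_k}e^{\mu_k y^2/2}\int_0^y e^{-\mu_k s^2/2}\,\mathrm{d}s\,\mathrm{d}y = \tfrac{4}{\mu_k}\psi(z)$, so with the lemma's own definitions of $\varphi$ and $\psi$ the coefficient of $\psi$ comes out as $2$, not $1$. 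Hence your claim that the computation ``reproduces the stated expression'' is unverified as written: either the reference's $\psi$ is normalized differently (e.g.\ it absorbs a factor of $2$), in which case you must reconcile the conventions explicitly before citing, or the displayed constant in the lemma needs correction. The discrepancy only shifts constants (both terms are of order $e^{z^2}$ for large $z$), but since Lemma~\ref{lem:bounds-first-passage-time} bounds exactly this expression term by term, it is not cosmetic, and it is precisely the point at which a blind verification should not wave its hands.
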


Lemma~\ref{lem-first-passage-density} suggests that for large thresholds the first passage time density is an exponential random variable. However, 
the expression for the mean first passage time does not provide much insight into the behavior of the first passage time density. In order to gain further insight into the first passage time density, we derive the following bounds on the mean first passage time.
\begin{lemma}[\bit{Bounds on the O-U mean first passage time}]\label{lem:bounds-first-passage-time}
For the O-U process~\eqref{eq:error-o-u}, and a large threshold $\eta^{\varepsilon}_k$, the mean first passage time $\subscr{\bar T}{pass}(\eta_k^{\varepsilon})$ satisfies:
\begin{align*}
\subscr{\bar T}{pass}^k(\eta_k^{\varepsilon}) & \le \frac{3\sqrt{\pi} \eta^{\varepsilon}_k}{\sqrt{2 \mu_k}} e^{{\eta^{\varepsilon}_k}^2 \mu_k/2}, \text{ and}\\
\subscr{\bar T}{pass}^k(\eta_k^{\varepsilon}) & \ge \frac{2}{\mu_k}\Big(
\frac{\sqrt{\pi}(e^{{\eta^{\varepsilon}_k}^2 \mu_k/2}-1)}{\sqrt{2}\eta^{\varepsilon}_k \sqrt{\mu_k}} 
+\frac{e^{{\eta^{\varepsilon}_k}^2 \mu_k/2}-1}{{2 \eta^{\varepsilon}_k}^2 \mu_k} -\frac{1}{2}\Big).
\end{align*}
\end{lemma}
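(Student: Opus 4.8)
The plan is to work directly from the closed form for $\subscr{\bar T}{pass}^k(\eta^{\varepsilon}_k)$ supplied by Lemma~\ref{lem-first-passage-density} and to sandwich the auxiliary functions $\varphi$ and $\psi$ between elementary expressions. Set $z := \eta^{\varepsilon}_k\sqrt{\mu_k}/\sqrt2$, so that $z^2 = {\eta^{\varepsilon}_k}^2\mu_k/2$ and $\subscr{\bar T}{pass}^k(\eta^{\varepsilon}_k) = \frac{2}{\mu_k}\big(\sqrt\pi\,\varphi(z) + \psi(z)\big)$. Rewriting the quantities on the right-hand sides of both claimed inequalities in terms of $z$ (using $\eta^{\varepsilon}_k\sqrt{\mu_k} = \sqrt2\,z$), the lemma is equivalent to the two-sided estimate $\frac{\sqrt\pi(e^{z^2}-1)}{2z} + \frac{e^{z^2}-1}{4z^2} - \frac12 \le \sqrt\pi\,\varphi(z) + \psi(z) \le \frac{3\sqrt\pi}{2}\,z\,e^{z^2}$, and I would prove the two halves separately.

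For the upper bound, first note $\int_0^{\tau} e^{-s^2}\,ds \le \sqrt\pi/2$ for every $\tau$, so $\psi(z) \le \frac{\sqrt\pi}{2}\varphi(z)$ and hence $\sqrt\pi\,\varphi(z) + \psi(z) \le \frac{3\sqrt\pi}{2}\varphi(z)$. Then, since $e^{\tau^2}$ is increasing, $\varphi(z) = \int_0^z e^{\tau^2}\,d\tau \le z\,e^{z^2}$, which gives the bound after substituting back for $z$ in terms of $\eta^{\varepsilon}_k$ and $\mu_k$.

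For the lower bound I would estimate $\varphi$ and $\psi$ from below. Using $\tau/z \le 1$ on $[0,z]$, $\varphi(z) \ge \frac1z\int_0^z \tau e^{\tau^2}\,d\tau = \frac{e^{z^2}-1}{2z}$, which accounts for the first term. For $\psi$, the key step is the pointwise bound $e^{-s^2} \ge e^{-\tau s}$ for $s\in[0,\tau]$ (valid because $s^2 \le \tau s$ there), giving $\int_0^{\tau} e^{-s^2}\,ds \ge \int_0^{\tau} e^{-\tau s}\,ds = \frac{1-e^{-\tau^2}}{\tau}$; inserting this into $\psi(z) = \int_0^z e^{\tau^2}\big(\int_0^{\tau} e^{-s^2}\,ds\big)\,d\tau$ yields $\psi(z) \ge \int_0^z \frac{e^{\tau^2}-1}{\tau}\,d\tau$. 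Applying once more $\frac1\tau \ge \frac{\tau}{z^2}$ on $[0,z]$ turns the last integral into the elementary $\frac1{z^2}\int_0^z \tau(e^{\tau^2}-1)\,d\tau = \frac{e^{z^2}-1}{2z^2} - \frac12$, which is at least $\frac{e^{z^2}-1}{4z^2} - \frac12$. Adding the estimates for $\sqrt\pi\,\varphi(z)$ and $\psi(z)$ and translating back to $\eta^{\varepsilon}_k$ and $\mu_k$ finishes the proof.

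The genuinely delicate point is the lower bound on $\psi$: one must choose the two successive pointwise majorizations so that the doubly iterated integral collapses to a closed form while preserving the dominant growth $e^{z^2}/z^2$. A lazier estimate such as $\int_0^{\tau} e^{-s^2}\,ds \ge \tau e^{-\tau^2}$ would give only $\psi(z) \ge z^2/2$ and thus lose the exponential entirely; the bound $e^{-s^2}\ge e^{-\tau s}$ is what retains it. Everything else — the substitution for $z$ and the elementary integrals $\int \tau e^{\tau^2}\,d\tau$ — is routine bookkeeping.
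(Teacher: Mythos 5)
Your proposal is correct and takes essentially the same route as the paper: both start from the closed form in Lemma~\ref{lem-first-passage-density} and sandwich $\varphi$ and $\psi$ between elementary pointwise estimates, with the identical bound $\varphi(z)\ge (e^{z^2}-1)/2z$, $\varphi(z)\le z e^{z^2}$, and $\psi(z)\le \frac{\sqrt{\pi}}{2}\varphi(z)\le\frac{\sqrt{\pi}}{2}ze^{z^2}$ driving the result. The only immaterial differences are in the lower bound for $\psi$, where the paper uses $e^{-s^2}\ge \frac{s}{\tau}e^{-s^2}$ and then $\frac{1}{2\tau}\ge\frac{1}{2z}$ (passing through $\varphi(z)$) while you use $e^{-s^2}\ge e^{-\tau s}$ and then $\frac{1}{\tau}\ge\frac{\tau}{z^2}$ — your intermediate estimate is in fact a factor of two sharper before you weaken it to the stated bound — and in the upper bound for $\psi$, where the paper invokes the Abramowitz--Stegun error-function inequality only to discard the extra term, so your simpler cap $\int_0^\tau e^{-s^2}\,\mathrm{d}s\le\frac{\sqrt{\pi}}{2}$ yields the same conclusion.
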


\begin{proof}
We start by bounding function $\varphi$. We note that
\begin{align*}
\varphi(z) \ge \int_{0}^z \frac{\tau}{z} e^{\tau^2} \mr d \tau =\frac{(e^{z^2}-1)}{2z}.
\end{align*}
Moreover, it trivially follows that
$\varphi(z) \le z e^{z^2}.$

We now derive bounds on $\psi$. We note that
\begin{align*}
\psi(z)  &\ge \int_{0}^z \frac{e^{\tau^2}}{2\tau} \int_{0}^{\tau} 2s e^{-s^2} \mr ds \mr d \tau
= \int_{0}^z \frac{e^{\tau^2}-1}{2\tau} \mr d \tau \\
&\ge \frac{1}{2z}(\varphi(z)-z) \ge \frac{(e^{z^2}-1)}{4z^2} -\frac{1}{2}.
\end{align*}
Furthermore, using the bounds on the error function from equation~(7.1.14) of~\cite{MA-IAS:64}, we obtain
 \begin{align*}
\psi(z)  &\le \int_{0}^z  e^{\tau^2} \Big( \frac{\sqrt{\pi}}{2} -\frac{e^{-\tau^2}}{\tau+ \sqrt{\tau^2+2 }}\Big) \mr d \tau \le  \frac{\sqrt{\pi}}{2} z e^{z^2}.
\end{align*}
Substituting these lower and upper bounds in the expression for $\subscr{\bar T}{pass}^k (\eta_k^{\varepsilon})$ in Lemma~\ref{lem-first-passage-density}, we obtain the expressions in the lemma.
%\begin{align*}
%\subscr{\bar T}{pass}(\eta_k^{\varepsilon}) & \le \frac{3\sqrt{\pi} \eta^{\varepsilon}_k}{2\sqrt{\mu_k}} e^{{\eta^{\varepsilon}_k}^2 \mu_k/4}, \text{ and}\\
%\subscr{\bar T}{pass}(\eta_k^{\varepsilon}) & \ge \frac{2}{\mu_k}\Big(
%\frac{\sqrt{\pi}(e^{{\eta^{\varepsilon}_k}^2 \mu_k/4}-1)}{\eta^{\varepsilon}_k \sqrt{\mu_k}} 
%+\frac{e^{{\eta^{\varepsilon}_k}^2 \mu_k/4}-1}{{\eta^{\varepsilon}_k}^2 \mu_k} -\frac{1}{2}\Big).
%\end{align*}
%This establishes the lemma.
\end{proof}

We now derive high probability bounds on the O-U process~\eqref{eq:error-o-u}. 
Since the bounds on the first passage time are dominated by the term $e^{{\eta^{\varepsilon}_k}^2 \mu_k/2}$, in the following we explore bounds associated with thresholds of the form $\eta_k^{\varepsilon}=K/\sqrt{\mu_k}, K\in \real_{>0}$ so that the probability of crossing these thresholds does not vary much with the centrality of the node. 
Before we derive high probability bounds, we define function $\map{\subscr{\prob}{lower}}{\real_{> 0}\times\real_{> 0}\times\real_{\ge 0} }{(0,1)}$ by
\begin{multline*}
\subscr{\prob}{lower}(K,\mu_k, t)=\\
\exp\Big(-t \Big( \frac{\mu_k}{2}\Big(
\frac{\sqrt{\pi}(e^{\frac{K^2}{2}}-1)}{\sqrt{2} K} 
+\frac{e^{\frac{K^2}{2}}-1}{2 K^2} -\frac{1}{2}\Big)\Big)^{-1} \Big).
\end{multline*}
%We also define  $\map{\subscr{\prob}{lower}}{\real_{>0\times \real_{>0}\times \real_{\ge 0}}}{(0,1)}$ by
%\[
%\subscr{\prob}{lower}(K, \mu_k,t)= \exp\Big(- \frac{2 {\mu_k}t}{3\sqrt{\pi} K} e^{-K^2/4}\Big).
%\]
\begin{lemma}[\bit{Uniform bounds for the O-U process}]\label{lem:o-u-uniform-bounds}
For the error dynamics~\eqref{eq:error-o-u} and a large constant $K\in \real_{>0}$, the following statements hold:
\begin{enumerate}
\item the probability that the error $\varepsilon_k$ is uniformly upper bounded by $\frac{K}{\sqrt{\mu_k}}$ until time $t$ is
\begin{align*}
\prob\Big(\max_{s\in[0,t]} \varepsilon_k(s) &\le \frac{K}{\sqrt{\mu_k}} \Big) 
\ge  \subscr{\prob}{lower}(K,\mu_k,t);
%\prob\Big(\max_{s\in[0,t]} \varepsilon_k(s) &\le \frac{K}{\sqrt{\mu_k}} \Big) 
%\le  \subscr{\prob}{lower}(K,\mu_k,t);
\end{align*}
\item the probability that the error $\varepsilon_k$ is uniformly bounded in the interval $[-\frac{K}{\sqrt{\mu_k}},\frac{K}{\sqrt{\mu_k}}] $ until time $t$ is
\begin{align*}
\prob\Big(\max_{s\in[0,t]} |\varepsilon_k(s)| \le \frac{K}{\sqrt{\mu_k}} \Big) 
\ge 2 \subscr{\prob}{lower}(K,\mu_k,t) -1.
\end{align*}
\end{enumerate}
\end{lemma}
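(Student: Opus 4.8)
The plan is to convert each uniform-bound event into a first passage time event for the O-U process~\eqref{eq:error-o-u} and then invoke Lemmas~\ref{lem-first-passage-density} and~\ref{lem:bounds-first-passage-time}. For the first statement, I would note that, since $\varepsilon_k(0)=0$ and the sample paths of~\eqref{eq:error-o-u} are almost surely continuous, the event $\{\max_{s\in[0,t]}\varepsilon_k(s)\le K/\sqrt{\mu_k}\}$ agrees, up to a null event, with $\{\subscr{T}{pass}^k(K/\sqrt{\mu_k})>t\}$, the event that the process has not reached the level $K/\sqrt{\mu_k}$ by time $t$. By Lemma~\ref{lem-first-passage-density}, for a large threshold this first passage time is exponentially distributed with mean $\subscr{\bar T}{pass}^k$, so $\prob(\subscr{T}{pass}^k(K/\sqrt{\mu_k})>t)=\exp(-t/\subscr{\bar T}{pass}^k(K/\sqrt{\mu_k}))$. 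Substituting $\eta^\varepsilon_k=K/\sqrt{\mu_k}$, so that ${\eta^\varepsilon_k}^2\mu_k=K^2$, into the lower bound of Lemma~\ref{lem:bounds-first-passage-time}, and using that $x\mapsto\exp(-t/x)$ is nondecreasing on $\real_{>0}$, then yields $\exp(-t/\subscr{\bar T}{pass}^k(K/\sqrt{\mu_k}))\ge\subscr{\prob}{lower}(K,\mu_k,t)$, which is the first statement.

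For the second statement, I would first exploit the symmetry of~\eqref{eq:error-o-u}: the drift $-\frac{\mu_k}{2}\varepsilon_k$ is odd in $\varepsilon_k$, the initial condition is $0$, and $-W$ has the same law as $W$, hence $-\varepsilon_k(\cdot)$ has the same law as $\varepsilon_k(\cdot)$. Consequently $\prob(\min_{s\in[0,t]}\varepsilon_k(s)\ge -K/\sqrt{\mu_k})=\prob(\max_{s\in[0,t]}\varepsilon_k(s)\le K/\sqrt{\mu_k})\ge\subscr{\prob}{lower}(K,\mu_k,t)$ by the first statement. Writing $A$ and $B$ for these two one-sided events, a union bound gives $\prob(A^c\cup B^c)\le\prob(A^c)+\prob(B^c)\le 2\big(1-\subscr{\prob}{lower}(K,\mu_k,t)\big)$, so that $\prob(\max_{s\in[0,t]}|\varepsilon_k(s)|\le K/\sqrt{\mu_k})=\prob(A\cap B)\ge 2\subscr{\prob}{lower}(K,\mu_k,t)-1$, which is the second statement.

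The step I expect to require the most care is the first: identifying the running-maximum event with the complement of a first passage event, and recognizing that the exponential form of the first passage density in Lemma~\ref{lem-first-passage-density} is itself an asymptotic, large-threshold statement, so the identity $\prob(\subscr{T}{pass}^k>t)=\exp(-t/\subscr{\bar T}{pass}^k)$ should be read in that regime --- which is consistent with the standing hypothesis that $K$ is large. The remaining ingredients --- the substitution $\eta^\varepsilon_k=K/\sqrt{\mu_k}$, monotonicity of $\exp(-t/\cdot)$, the symmetry (reflection) argument, and the Bonferroni-type union bound --- are routine.
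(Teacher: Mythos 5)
Your proposal is correct and follows essentially the same route as the paper: both convert the running-maximum event into a first passage time event, apply the exponential (large-threshold) first passage law of Lemma~\ref{lem-first-passage-density} together with the lower bound on $\subscr{\bar T}{pass}^k$ from Lemma~\ref{lem:bounds-first-passage-time}, and then obtain the two-sided statement via the symmetry of~\eqref{eq:error-o-u} about zero and a union bound. The only differences are presentational --- you make explicit the identification of the events up to a null set and the monotonicity of $x\mapsto\exp(-t/x)$, which the paper uses implicitly.
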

\begin{proof}
We start by with establishing the first statement. We note that
\begin{align*}
\prob\Big(\max_{s\in[0,t]} \varepsilon_k(s) \ge \frac{K}{\sqrt{\mu_k}} \Big) =\prob \Big(\subscr{T}{pass}^k \Big(\frac{K}{\sqrt{\mu_k}}\Big) \le t\Big).
\end{align*}
%where $\subscr{T}{pass}(\cdot)$ is the first passage time of the O-U process~\eqref{eq:error-o-u} as a function of the threshold. 
It follows from Lemma~\ref{lem-first-passage-density} that 
\[
\prob \Big(\subscr{T}{pass}^k \Big(\frac{K}{\sqrt{\mu_k}}\Big) \le t\Big) \le 1 - \exp\Big(\frac{-t}{\subscr{\bar T}{pass}^k (\frac{K}{\sqrt{\mu_k}})}\Big). 
\]
Substituting, the lower bound on the mean first passage time from Lemma~\ref{lem:bounds-first-passage-time}, we obtain
\[
\prob \Big(\subscr{T}{pass}^k \Big(\frac{K}{\sqrt{\mu_k}}\Big) \le t\Big) \le 1 - \subscr{\prob}{lower} (K, \mu_k,t),
\]
%The second inequality follows similarly,
and the first statement follows.

By symmetry of the O-U process~\eqref{eq:error-o-u} about $\varepsilon_k=0$, it follows that
\[
\prob \Big(\subscr{T}{pass}^k\Big(-\frac{K}{\sqrt{\mu_k}}\Big) \le t\Big) \le 1 - \subscr{\prob}{lower} (K, \mu_k,t). 
\]
It follows from union bounds that
\[
\prob\Big(\max_{s\in[0,t]} |\varepsilon_k(s)| \ge \frac{K}{\sqrt{\mu_k}} \Big) \le 2(1 - \subscr{\prob}{lower} (K, \mu_k,t)), 
\]
and the second statement follows. 
\end{proof}

We now utilize the uniform bounds for the O-U process to derive bounds on the expected decision time and error rate for the reduced DDM. 
\begin{proposition}[\bit{Performance bounds for the reduced DDM}]\label{prop:bounds}
For the reduced DDM~\eqref{eq:reduced-ddm} at node $k$ with large thresholds $\pm\eta_k$ and a sufficiently large constant $K\in \real_{>0}$, the following statements hold with probability higher than $2\subscr{\prob}{lower} (K, \mu_k, \mr{ET}_k))-1$:
\begin{enumerate}
\item the expected decision time satisfies
\begin{multline*}
\frac{\eta_k - \frac{K}{\sqrt{\mu_k}}}{\beta}\tanh \Big(\beta n \Big(\eta_k - \frac{K}{\sqrt{\mu_k}}\Big)\Big) \le \mr{ET}_k \\
\le \frac{\eta_k + \frac{K}{\sqrt{\mu_k}}}{\beta}\tanh \Big(\beta n \Big(\eta_k + \frac{K}{\sqrt{\mu_k}}\Big)\Big);
\end{multline*}
\item the error rate satisfies
\begin{multline*}
\frac{1}{ 1+ \exp\big({2\beta n}\big(\eta_k + \frac{K}{\sqrt{\mu_k}}\big)\big) } \le \mr{ER}_k  \\
 \le 
\frac{1}{ 1+ \exp\big({2\beta n}\big(\eta_k - \frac{K}{\sqrt{\mu_k}}\big)\big) }.
\end{multline*}
\end{enumerate}
\end{proposition}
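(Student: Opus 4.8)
The plan is to exploit the additive decomposition obtained by subtracting the two rows of~\eqref{eq:reduced-ddm}: writing $\subscr{x}{cen}:=y_k-\varepsilon_k$ one has $\mr d \subscr{x}{cen}(t)=\beta\,\mr d t+\tfrac{1}{\sqrt n}\,\mr d W_1(t)$, so $\subscr{x}{cen}$ has exactly the law of the centralized DDM~\eqref{eq:centralized-ddm} (drift $\beta$, diffusion rate $\sigma^2=1/n$), and since $\subscr{x}{cen}$ is a functional of $W_1$ alone while $\varepsilon_k$ is a functional of $W_2$ alone, the two are independent. Write $\delta_k:=K/\sqrt{\mu_k}$ and let $\mathcal E$ be the event that $|\varepsilon_k(s)|\le\delta_k$ for all $s$ up to the decision time. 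Because on $\mathcal E$ the decision occurs no later than the first passage of $\subscr{x}{cen}$ to $\pm(\eta_k+\delta_k)$, the second statement of Lemma~\ref{lem:o-u-uniform-bounds}, applied over this horizon, gives $\prob(\mathcal E)\ge 2\subscr{\prob}{lower}(K,\mu_k,\mr{ET}_k)-1$ once $K$ is large enough; all the bounds below are then asserted conditionally on $\mathcal E$.

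For the decision time, on $\mathcal E$ one has $\big|\,|y_k(t)|-|\subscr{x}{cen}(t)|\,\big|\le|\varepsilon_k(t)|\le\delta_k$. Hence at the decision instant $T_k$ the process $\subscr{x}{cen}$ has magnitude at least $\eta_k-\delta_k$, so its symmetric first passage time $T^{-}$ to $\pm(\eta_k-\delta_k)$ satisfies $T^{-}\le T_k$; conversely, at the symmetric first passage time $T^{+}$ of $\subscr{x}{cen}$ to $\pm(\eta_k+\delta_k)$ the process $y_k$ has magnitude at least $\eta_k$, so $T_k\le T^{+}$. Replacing the random horizon in $\mathcal E$ by a deterministic one (a high-probability upper bound on $T^{+}$) makes $\mathcal E$ independent of $\subscr{x}{cen}$, and taking expectations gives $\expt[T^{-}]\le\expt[T_k\mid\mathcal E]\le\expt[T^{+}]$; substituting $\sigma^2=1/n$ and the thresholds $\pm(\eta_k\mp\delta_k)$ into~\eqref{eq:ddm-et-er} yields the first statement.

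For the error rate I would use the two-sided pointwise bound $\subscr{x}{cen}(t)-\delta_k\le y_k(t)\le\subscr{x}{cen}(t)+\delta_k$ valid on $\mathcal E$, which produces two pathwise implications. First, if $y_k$ reaches $-\eta_k$ before $+\eta_k$, then $\subscr{x}{cen}$ reaches $-(\eta_k-\delta_k)$ before $+(\eta_k+\delta_k)$ --- otherwise $\subscr{x}{cen}\ge\eta_k+\delta_k$ at an earlier instant would force $y_k\ge\eta_k$ first, a contradiction. Second, if $\subscr{x}{cen}$ reaches $-(\eta_k+\delta_k)$ before $+(\eta_k-\delta_k)$, then $y_k$ reaches $-\eta_k$ before $+\eta_k$, since up to that instant $\subscr{x}{cen}<\eta_k-\delta_k$ keeps $y_k<\eta_k$ while at that instant $y_k\le-\eta_k$. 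Hence $\mr{ER}_k$, conditioned on $\mathcal E$, lies between the probabilities that the drift-diffusion $\subscr{x}{cen}$ exits the asymmetric intervals $\big(-(\eta_k+\delta_k),\,\eta_k-\delta_k\big)$ and $\big(-(\eta_k-\delta_k),\,\eta_k+\delta_k\big)$ through the lower boundary; evaluating these by the elementary scale-function (gambler's ruin) formula and retaining the leading order for large thresholds, both collapse to the symmetric-threshold error rates $1/\big(1+\exp(2\beta n(\eta_k\mp\delta_k))\big)$ of~\eqref{eq:ddm-et-er}, which is the second statement.

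I expect the error-rate step to be the main obstacle. Unlike the decision time, the error event is sensitive to \emph{which} threshold is hit first, and a naive pathwise comparison of $y_k$ against a symmetric-threshold centralized DDM fails: a path of $\subscr{x}{cen}$ that merely touches $+(\eta_k-\delta_k)$ need not carry $y_k$ up to $+\eta_k$. The asymmetric-interval argument repairs this, at the price that the exit probabilities match the clean expressions in the statement only to leading order in the large-threshold regime --- which is exactly the regime of the proposition. A secondary technical point is the bookkeeping of the random horizon in Lemma~\ref{lem:o-u-uniform-bounds}: the statement records it as $\mr{ET}_k$, so one must check that for large thresholds the decision time concentrates closely enough around its mean for this substitution to be legitimate.
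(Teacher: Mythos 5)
Your proposal is correct in substance and follows essentially the same route as the paper: decompose $y_k=\subscr{x}{cen}+\varepsilon_k$ with $\subscr{x}{cen}$ a centralized DDM of diffusion rate $1/\sqrt{n}$, invoke Lemma~\ref{lem:o-u-uniform-bounds} to confine $\varepsilon_k$ to $[-K/\sqrt{\mu_k},K/\sqrt{\mu_k}]$ up to the decision time, conclude that the effective threshold seen by the centralized component lies between $\eta_k-K/\sqrt{\mu_k}$ and $\eta_k+K/\sqrt{\mu_k}$, and read off the bounds from~\eqref{eq:ddm-et-er} by monotonicity of $\mr{ET}$ and $\mr{ER}$ in the threshold. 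The two points where you diverge are worth noting. First, the random-horizon issue you flag as an unresolved technical point (substituting $\mr{ET}_k$ for the random time $T_k$) is handled in the paper without any concentration argument or deterministic-horizon surrogate: since $\subscr{\prob}{lower}(K,\mu_k,t)$ is of the form $\exp(-t/\subscr{\bar T}{pass}^k)$ and hence convex in $t$, Jensen's inequality gives $\expt[\subscr{\prob}{lower}(K,\mu_k,T_k)]\ge\subscr{\prob}{lower}(K,\mu_k,\mr{ET}_k)$, which is exactly the probability appearing in the statement; this is the cleaner fix and you should adopt it. Second, your asymmetric-interval (gambler's ruin) treatment of the error rate is more careful than the paper's, which simply asserts that on the high-probability event the centralized component faces a symmetric threshold somewhere in $[\eta_k-K/\sqrt{\mu_k},\eta_k+K/\sqrt{\mu_k}]$ and applies monotonicity of~\eqref{eq:ddm-et-er}; your version is more honest about the fact that the boundary displacement acts in opposite directions on the two thresholds, at the cost that the asymmetric exit probabilities coincide with the stated symmetric expressions only to leading order in the large-threshold regime --- which is the regime assumed in the proposition, so nothing is lost. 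In short: same key lemma, same decomposition, same conclusion; the paper buys simplicity with the convexity/Jensen step, and your refinement of the error-rate comparison is a legitimate (slightly sharper) variant rather than a different method.
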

\begin{proof}
It follows from Lemma~\ref{lem:o-u-uniform-bounds} that until a given time $t$, the error process~\eqref{eq:error-o-u} belongs to the set $[-K/\sqrt{\mu}, K/\sqrt{\mu}]$ with probability greater than $2\subscr{\prob}{lower}(K, \mu_k, t)-1$. For the reduced DDM at node $k$, the time of interest is the decision time $t=T_k$. 
Furthermore, $\subscr{\prob}{lower}(K, \mu_k, t)$ is a convex function of $t$. Hence, from the Jensen inequality, the error is bounded in the set 
$[-K/\sqrt{\mu}, K/\sqrt{\mu}]$ at the time of decision with probability greater than $2\subscr{\prob}{lower} (K, \mu_k, \mr{ET}_k))-1$. This implies the effective threshold for the centralized DDM component in the reduced DDM at node $k$ is greater than $\eta_k -K/\sqrt{\mu_k}$ and smaller than $\eta_k + K/\sqrt{\mu_k}$ with probability greater than $2\subscr{\prob}{lower} (K, \mu_k, \mr{ET}_k))-1$. 
Since the decision time increases with increasing threshold and the error rate decreases with increasing threshold, inequalities for the decision time and the error rate follow from the corresponding expressions  in~\eqref{eq:ddm-et-er}. 
\end{proof}

\section{Numerical Illustrations}\label{sec:numerical}

Consider a set of nine decision-makers and let their interaction topology be modeled by the graph shown in Figure~\ref{fig:graph}. 
%nodes communicating as shown in 
%communicating according to the interaction topology in 
For this graph, the node certainty indices are $\mu_1=8.1$, $\mu_2=\mu_3=\mu_4=\mu_5=4.26$, and $\mu_6=\mu_7=\mu_8=\mu_9=1.6$.  
We first compare the performance of the reduced DDM with the coupled DDM. 
We pick the 
drift  rate $\beta$ at each node as $0.1$. We obtained error rates and decision times at node $6$ for the coupled DDM  and the reduced DDM through Monte-Carlo simulations, and we compare them  in Figure~\ref{fig:ddm-reduced}. 
Note that throughout this section, for better pictorial depiction, we plot the log-likelihood ratio of no error $\log (\frac{1-\mr{ER}_k}{\mr{ER}_k})$ instead of the error rate $\mr{ER}_k$. The log-likelihood ratio of no error decreases monotonically with the error rate. 
We also computed first passage time distributions at node $6$ for the coupled DDM and the reduced DDM with a threshold equal to $3$, and we compare them in Figure~\ref{fig:first-passage-comp}. 
It can be seen that the performance of the reduced DDM approximates the performance of  the coupled DDM very well. 

\begin{figure}[ht!]
\centering
\includegraphics[width=0.5\linewidth]{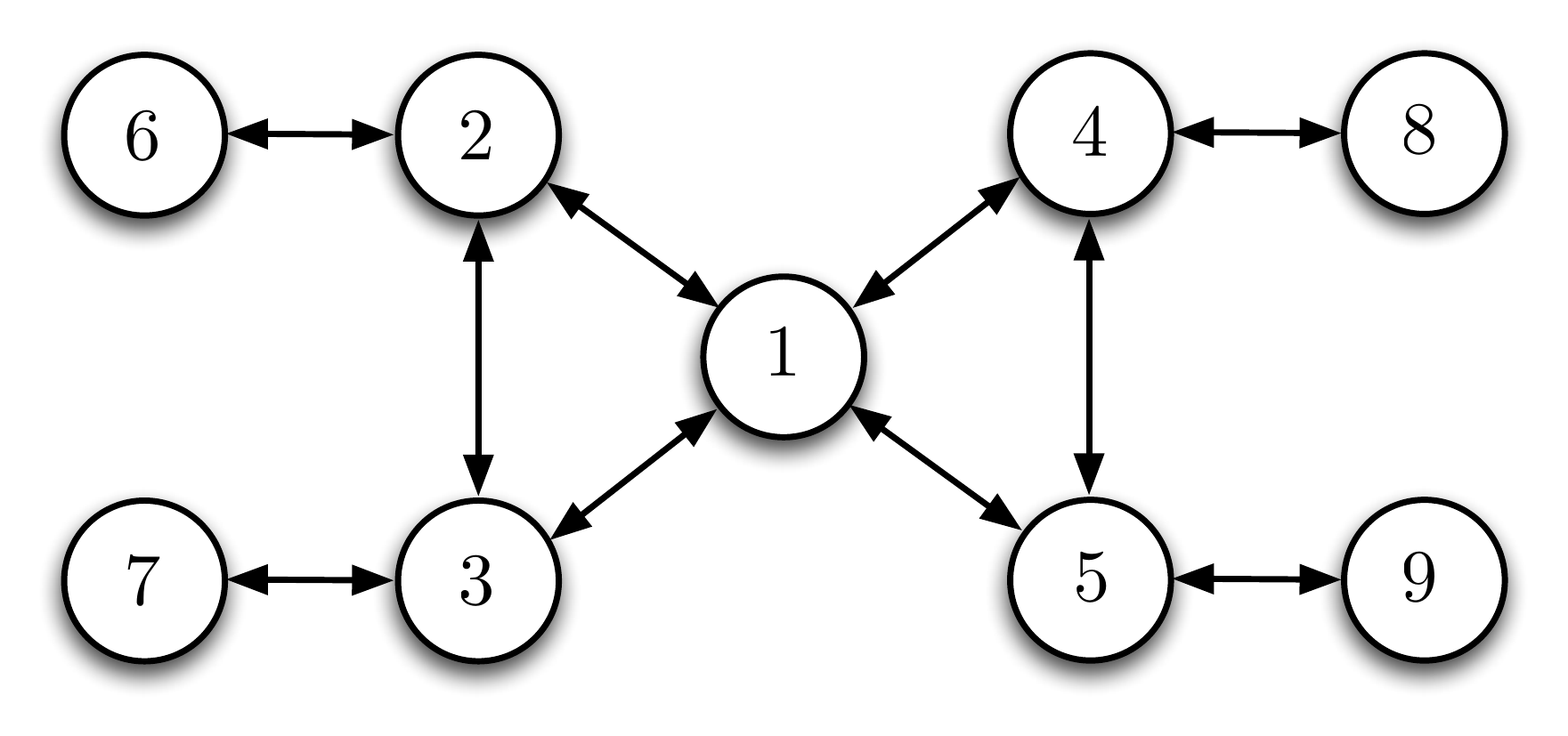}
\caption{Interaction graph for decision-makers. \label{fig:graph}}
\end{figure}

\begin{figure}
\centering 
\subfigure[Log-likelihood ratio of no error]{
\includegraphics[width=0.23\textwidth]{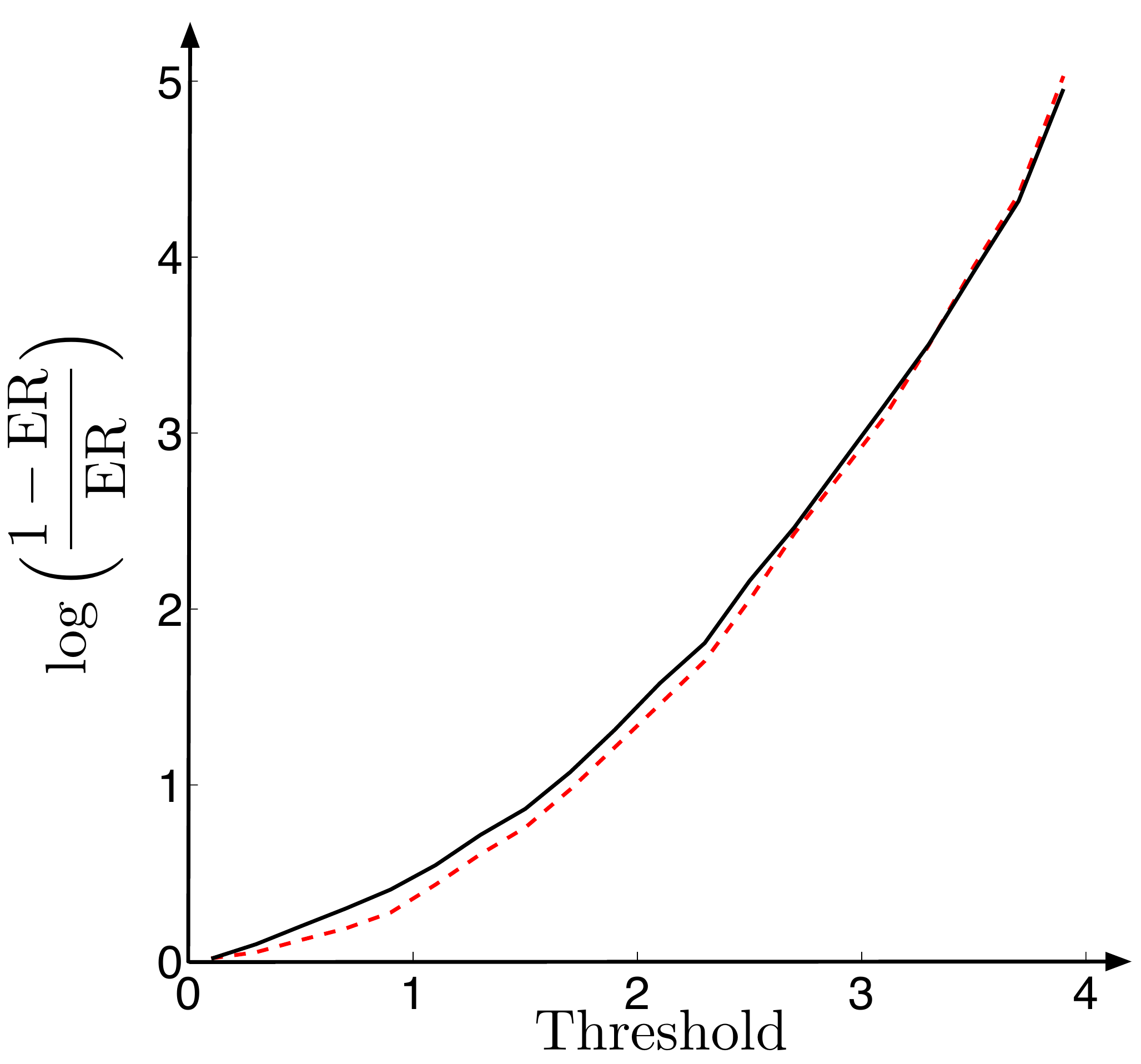}}
\subfigure[Expected decision times]{
\includegraphics[width=0.23\textwidth]{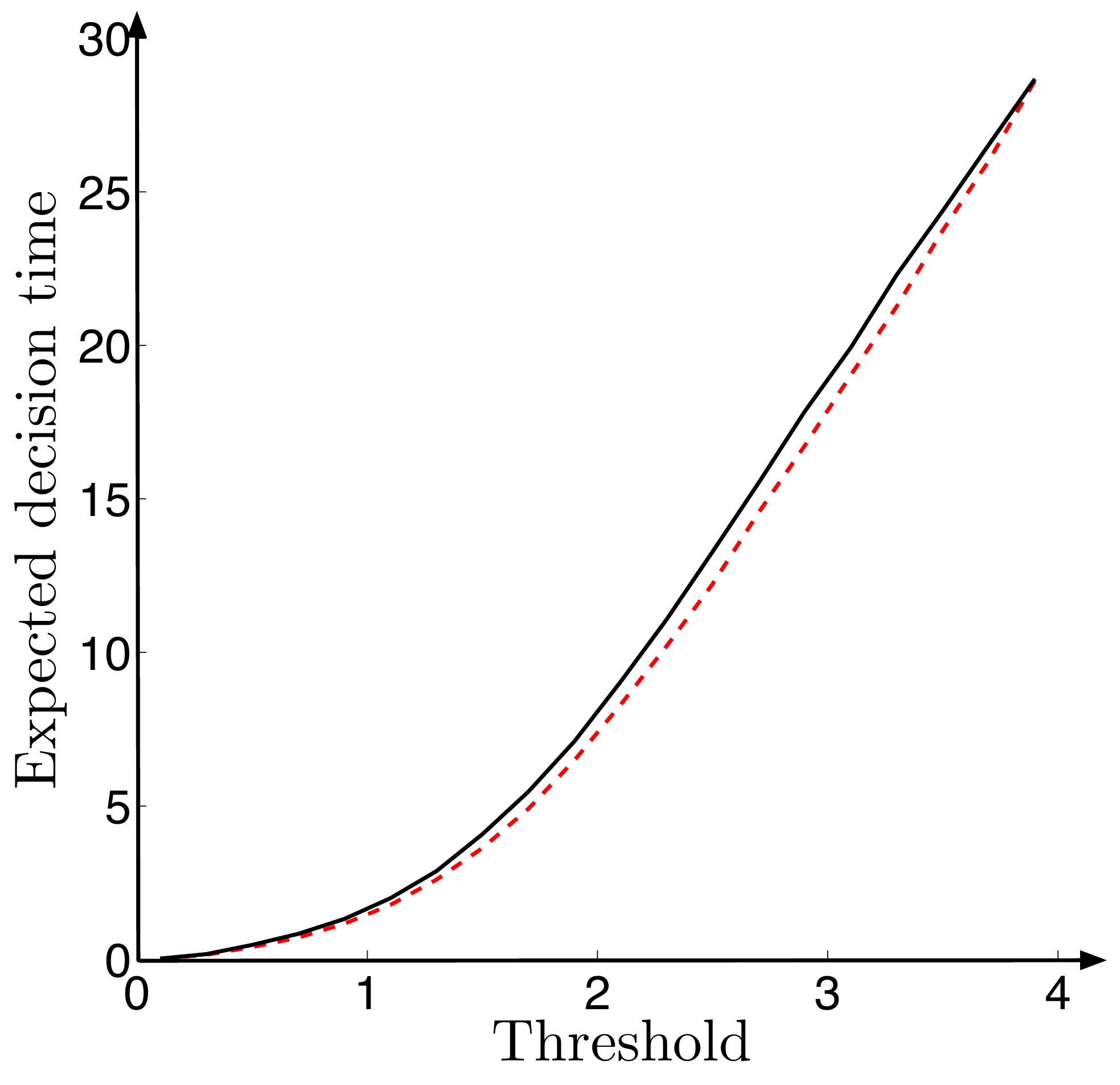}}
\subfigure[Passage time distribution]{
\includegraphics[width=0.24\textwidth]{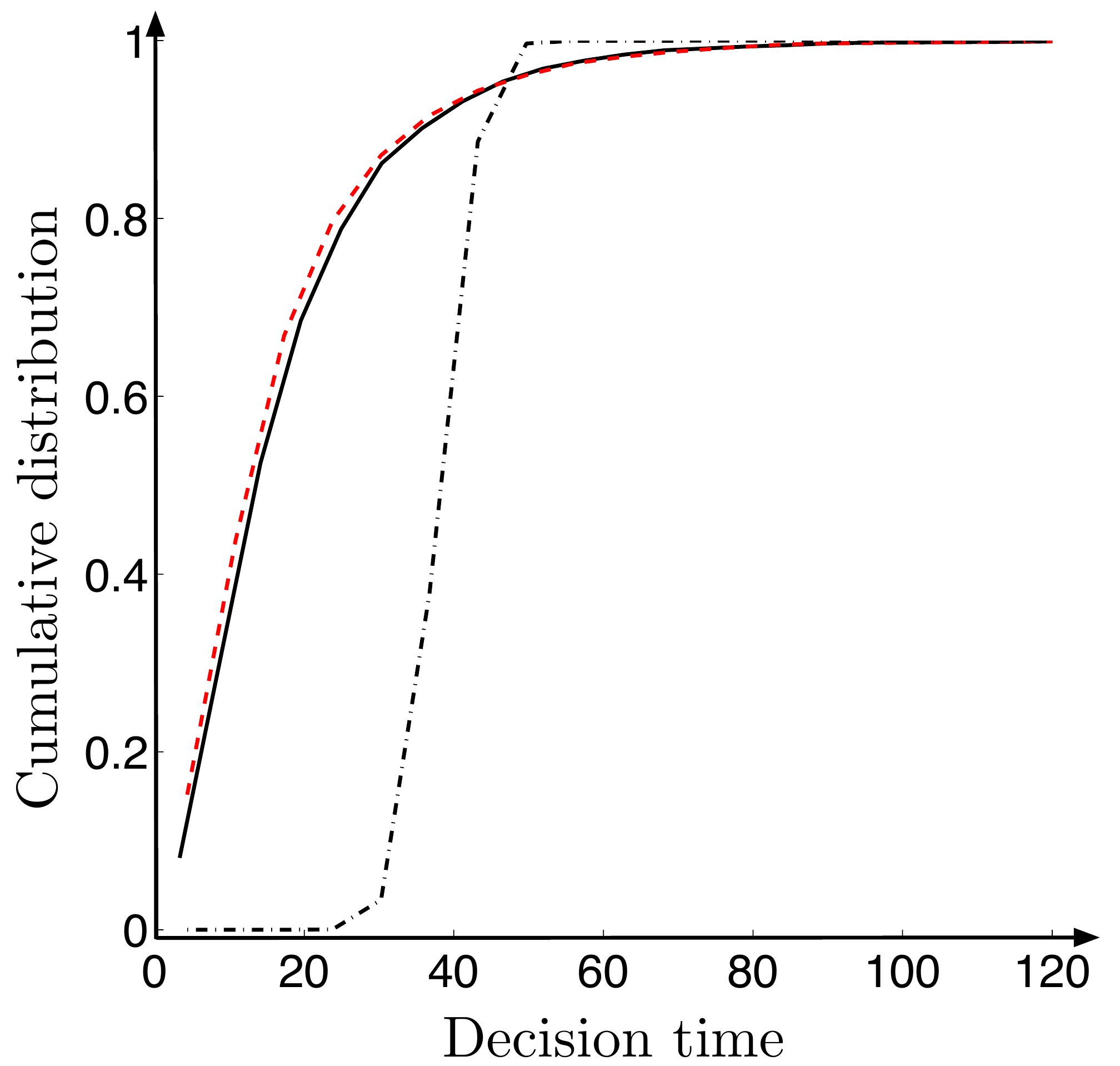} \label{fig:first-passage-comp}}
\caption{Error rates, decision times, and the first passage time distribution of the reduced DDM compared with the coupled DDM. Solid black, dashed red, and black dashed-dotted lines represent the coupled DDM, the reduced DDM, and the centralized DDM, respectively.\label{fig:ddm-reduced} {  Note that the performance of the centralized DDM, which is asymptotically equivalent to the coupled DDM, is significantly different from the performance of the coupled DDM for finite thresholds.}}
\end{figure}

We compare the error rates and decision times for the coupled DDM with the centralized DDM  in Figure~\ref{fig:coupled-DDM}. 
For the interaction topology in Figure~\ref{fig:graph} and $\beta=0.1$, we performed Monte-Carlo simulations on the coupled DDM to determine the error rates and the decision times at each node as a function of threshold value. 
%Note that 
%the upper bound on the first passage time distribution  for the effective DDM is closer to the first passage time distribution for the coupled DDM, as compared to the first passage time distribution  for a centralized decision maker.  The primary reason for this difference is that the noise in the centralized case is very low. In particular, the diffusion rate for a centralized decision maker is $1/n$ and with increasing $n$, the distribution function converges to a step function, while for the coupled DDM the diffusion rate at each node reaches $1/n$ only asymptotically.
\begin{figure}[ht!]
\centering 
\subfigure[Log-likelihood ratio of no error]{\includegraphics[width=0.24\textwidth]{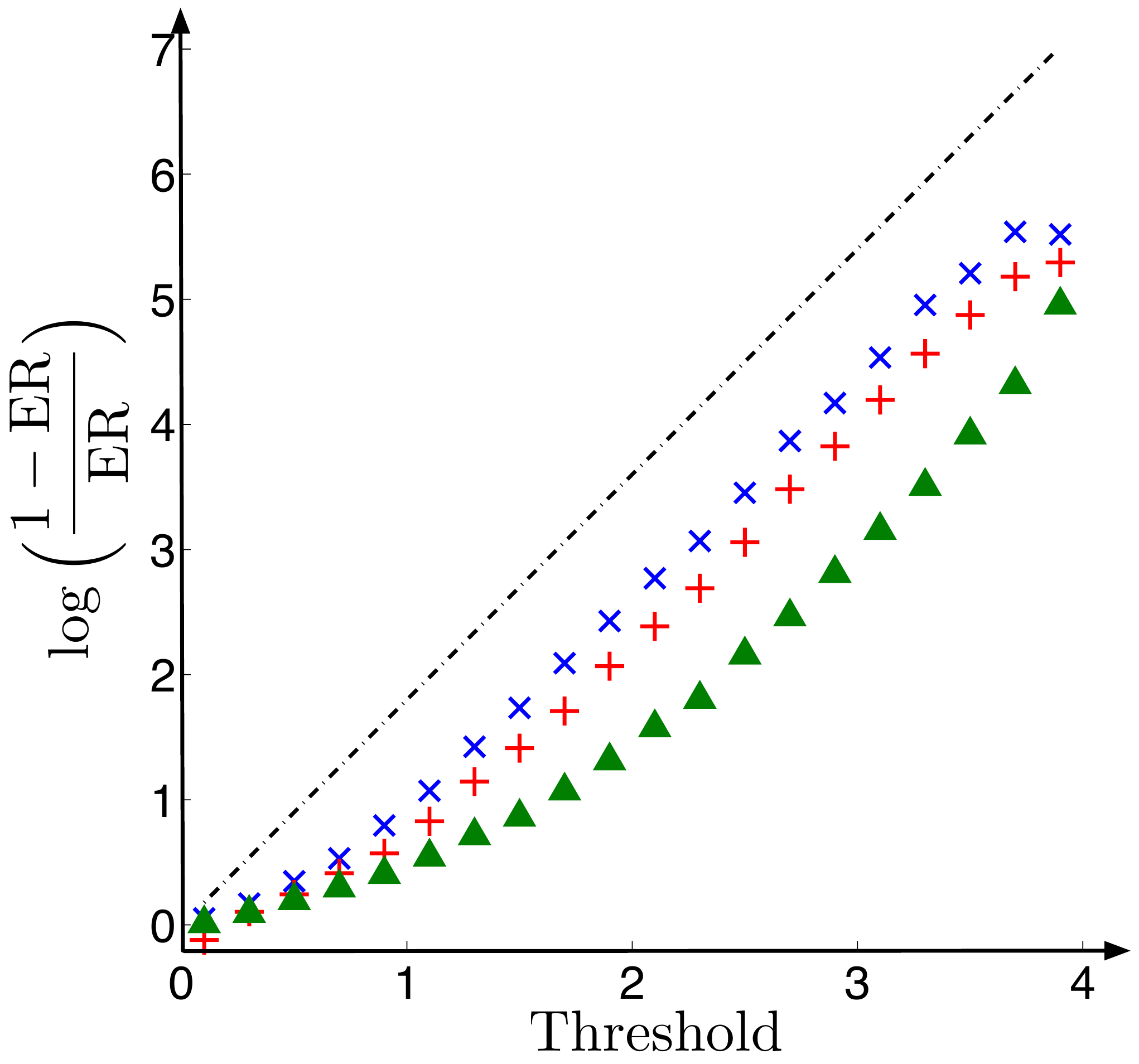}}
\subfigure[Expected decision times]{\includegraphics[width=0.24\textwidth]{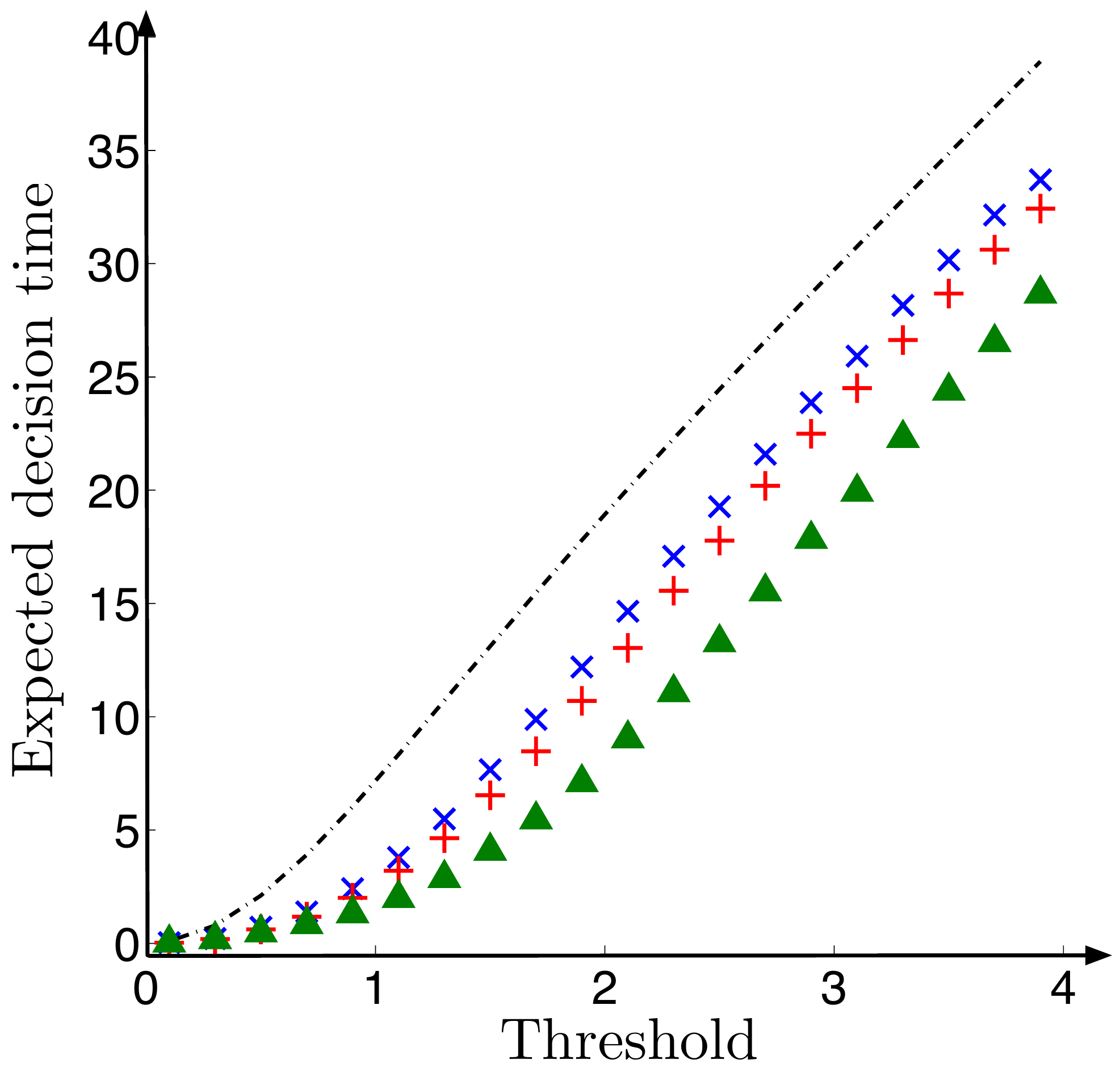}}
%\subfigure[Passage time distributions]{\includegraphics[width=0.3\textwidth]{}}
\caption{Comparison of the performance of the coupled DDM with the performance of the centralized DDM at each node. The dotted black line represents the performance of a centralized decision-maker. The blue $\times$, the red $+$, and the green triangles represent the performance of the coupled DDM for decision-makers $1$, $2$, and $6$, respectively.   
%The blue dashed lines, the red solid lines with dots, and the green solid lines represent the upper/lower bound of the performance of the effective DDM for decision-makers $1$, $2$, and $6$, respectively. 
\label{fig:coupled-DDM}}
\end{figure}
Note that the difference in the performance of the coupled DDM and the centralized DDM is smaller for a more centrally located decision-maker. Furthermore, for large thresholds, the expected decision time graph for the coupled DDM is parallel to the  expected decision time graph for the centralized DDM. 
Thus, at large thresholds, the expected decision time graph for the coupled DDM at node $k$ can be obtained by translating the expected decision time graph for the centralized DDM horizontally to the right. Such a translation corresponds to a reduction in the threshold for the centralized DDM. 
%
%It follows from expressions in~\eqref{eq:ddm-et-er} that
%for large thresholds the expected decision time for the centralized DDM is a linear function of the threshold.
%%
%This suggests that, 
%Since, at high thresholds, the expected decision time for the centralized DDM increases linearly with the threshold, the performance of the $k$-th decision-maker  in the coupled DDM can be captured by the centralized DDM with a smaller threshold than the threshold for the coupled DDM.  
%
%
%it follows that, for high thresholds, the coupled DDM  at a node behaves like  a centralized DDM with a smaller threshold than the threshold for the coupled DDM. 
%
%Moreover, the difference between these two thresholds 
Moreover, this reduction should be a function of the centrality of the node. This observation is in the spirit of our bounds in Proposition~\ref{prop:bounds}. In fact, insights from Proposition~\ref{prop:bounds} and these numerical results suggest that for a given instance of the coupled DDM, and large thresholds, there exists  a constant $\bar K$ such that the coupled DDM at node $k$ is equivalent to a centralized DDM with threshold $\eta_k - \bar K/ \sqrt{\mu_k}$.

We now numerically investigate the behavior of the constant $\bar K$. 
Let $\Delta T_k$ be the difference between the expected decision times at node $k$ for the centralized DDM and the coupled DDM at large thresholds. Then, the threshold for the centralized DDM should be reduced by $\beta \Delta T_k$ to capture the performance of the coupled DDM at node $k$. 
We now investigate the threshold correction $\beta \Delta T_k$ as a function of the centrality  $\mu$ of a node in the interaction graph and the drift rate. To this end, we performed Monte-Carlo simulations with Erd{\"o}s-R{\' e}yni graphs, and we plot $\beta \Delta T_k$ as a function of $1/\sqrt{\mu}$ in Figure~\ref{fig:gap}. For the Monte-Carlo simulations, we pick the number of nodes $n$ uniformly in $\{3, \ldots, 10\}$, and connect any two nodes with probability $1.1\times \log(n)/n$. 
{ We set the threshold $\eta_k$ at each node equal to $3$.}
It can be seen in Figure~\ref{fig:gap} that the threshold correction $\beta \Delta T_k$ varies linearly with $1/\sqrt{\mu}$. We further compute the the slope of the linear trend in Figure~\ref{fig:gap} as a function of the drift rate, and plot it in Figure~\ref{fig:gain}. We observe that the function $\map{\bar K}{\real_{>0}}{\real_{>0}}$ defined by
\[
\bar K(\beta) = \frac{ e^{-\frac{1}{4 \sqrt{\beta}}}}{\sqrt{\beta}(1 + \beta/3)},
\]
captures the numerically observed slope as a function of the drift rate well. The function $\bar K(\beta)$ is the red dashed curve in Figure~\ref{fig:gain}. 

\begin{figure}[ht!]
\centering 
\subfigure[Threshold correction for $\beta=0.1$]{\includegraphics[width=0.24\textwidth]{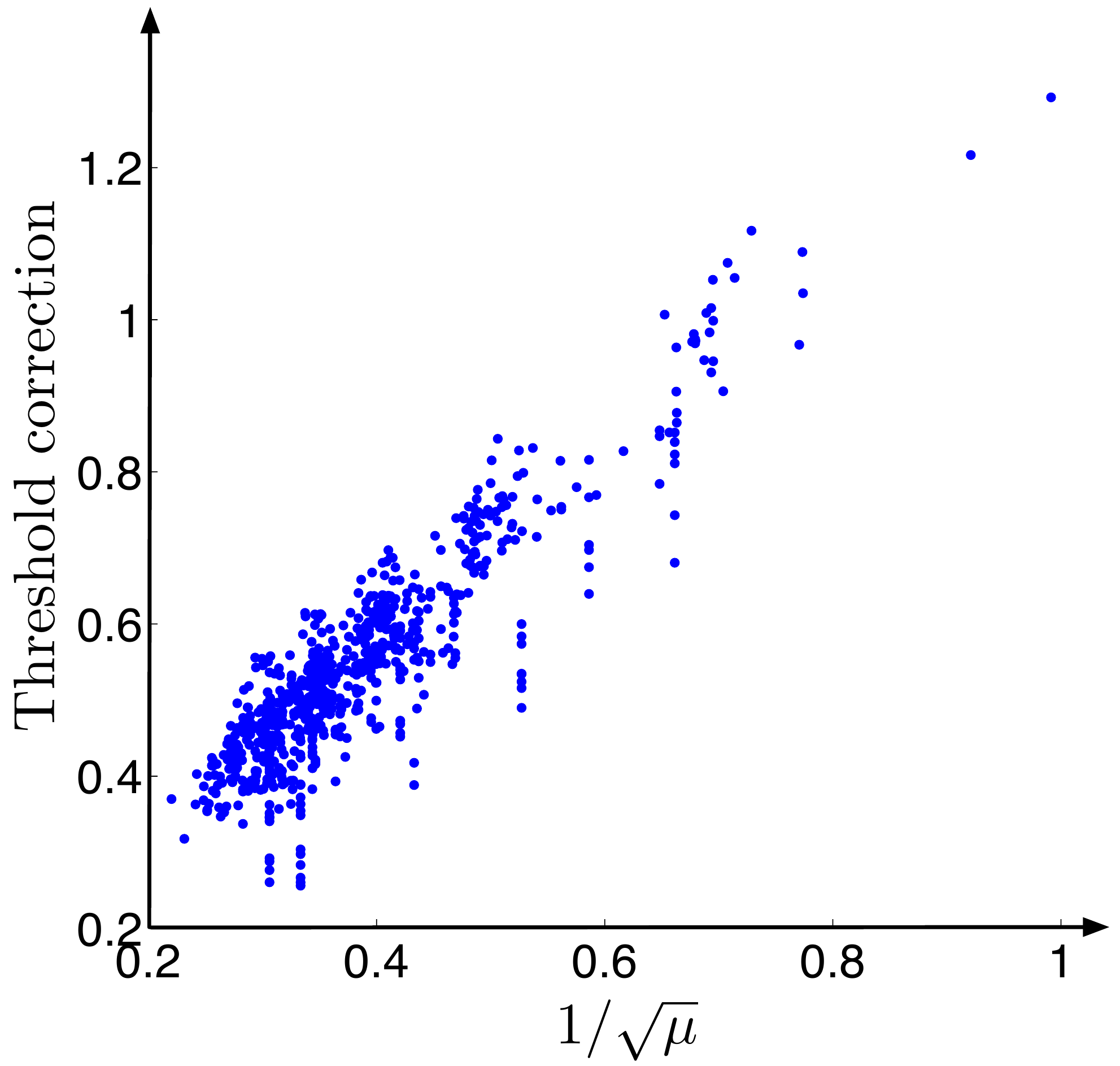}\label{fig:gap}}
\subfigure[Proportionality Constant]{\includegraphics[width=0.24\textwidth]{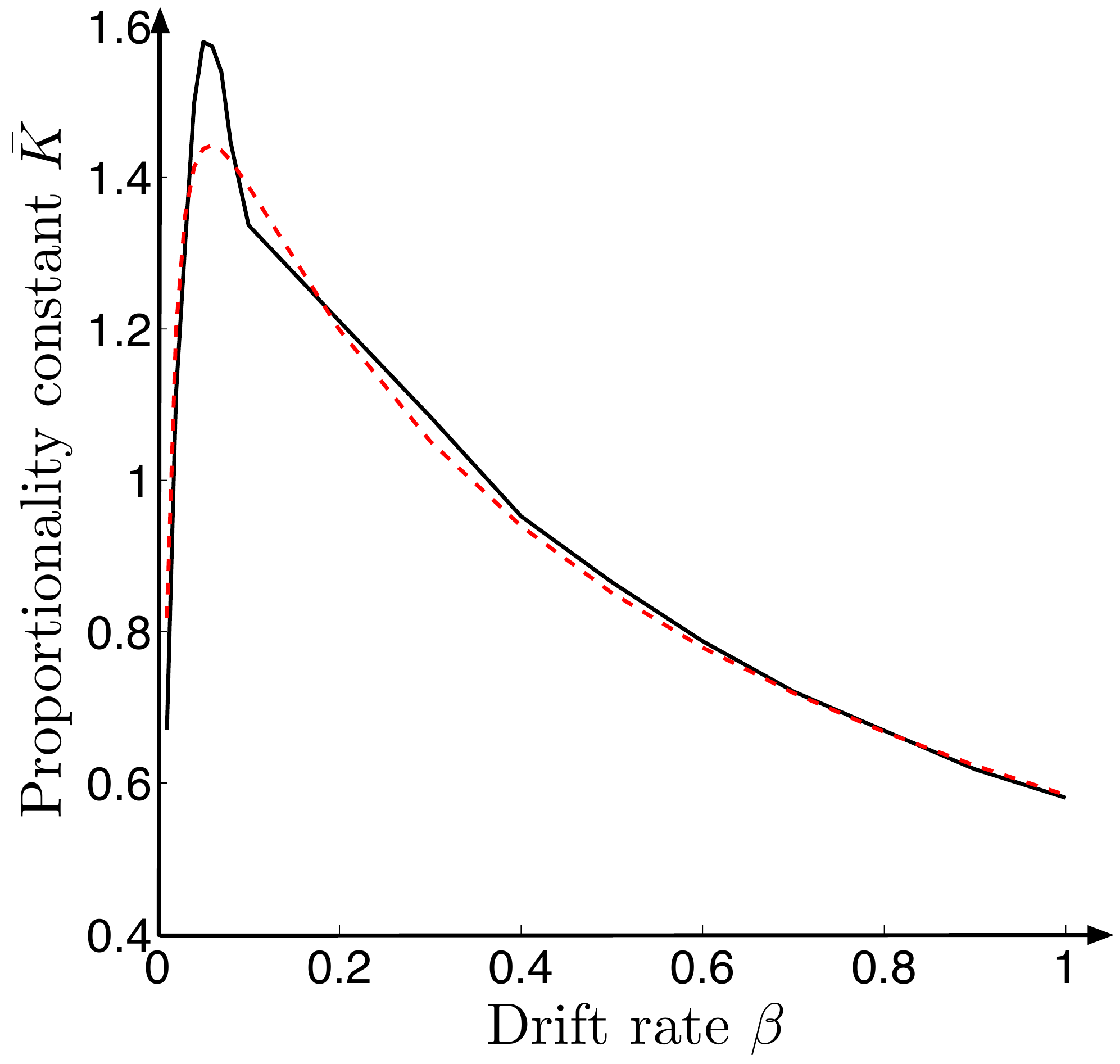}\label{fig:gain}}
\caption{(a) Threshold correction as a function of the node centrality. (b) Slope of the linear trend in (a) as a function of the drift rate $\beta$. The solid black line represents numerically computed slopes and the dashed red line represents the fitted function $\bar K(\beta)$.}
\end{figure}

We refer to the centralized DDM with threshold
$\supscr{\eta_k}{corr} =\max\{0, \eta_k - {\bar K(\beta)}/{\sqrt{\mu_k}}\}$ as the threshold corrected 
centralized DDM at node $k$. 
We compare the performance of the coupled DDM with the threshold corrected 
centralized DDM at nodes $1,2,$ and $6$ in Figure~\ref{fig:corrected-DDM}. It can be seen that the threshold corrected centralized DDM is fairly accurate at large threshold values and the minimum threshold value at which the threshold corrected centralized DDM starts to capture the performance of the coupled DDM well depends on the centrality of the node. 

\begin{figure}[ht!]
\centering 
\subfigure[Log-likelihood ratio of no error]{\includegraphics[width=0.24\textwidth]{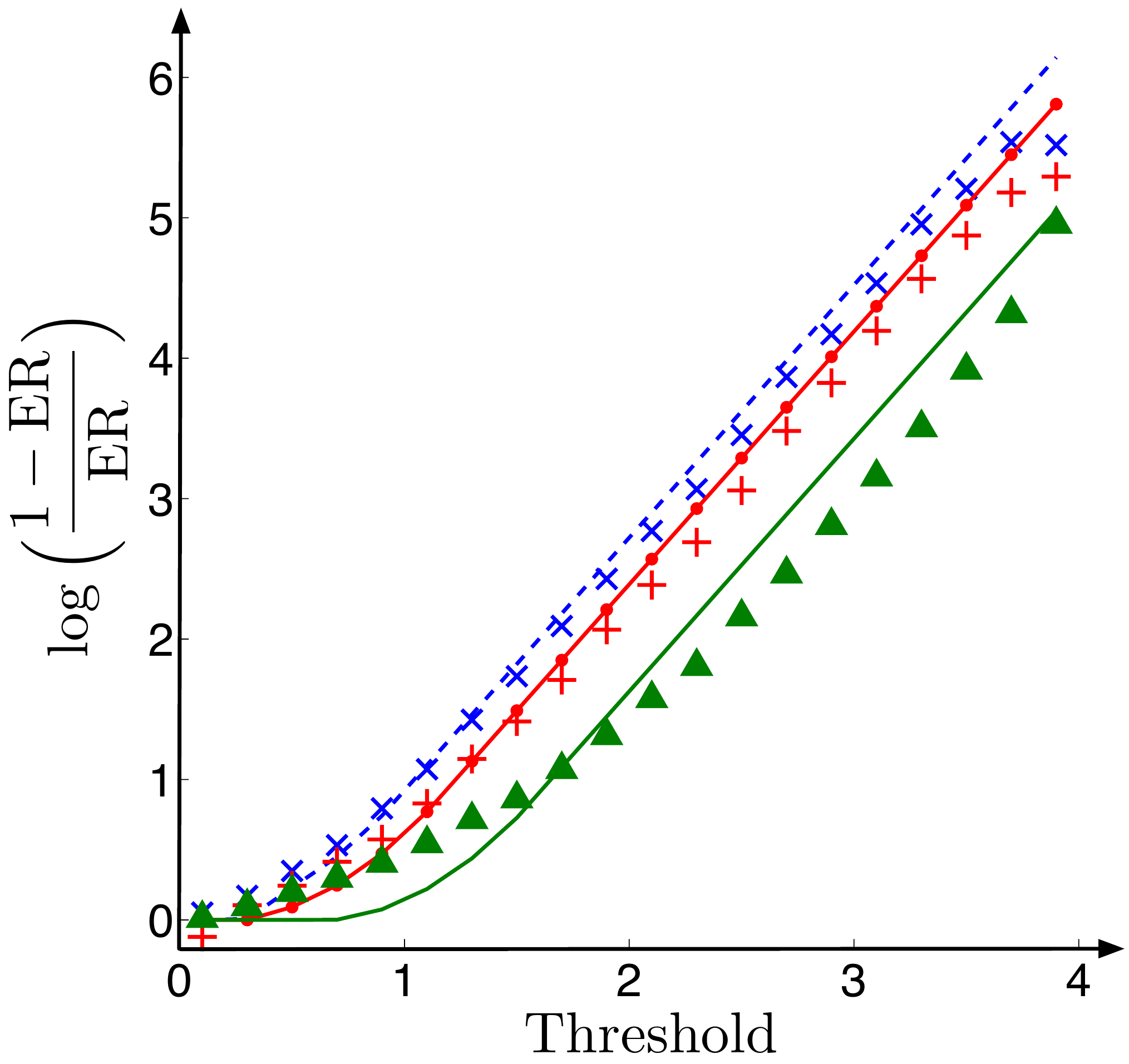}}
\subfigure[Expected decision times]{\includegraphics[width=0.24\textwidth]{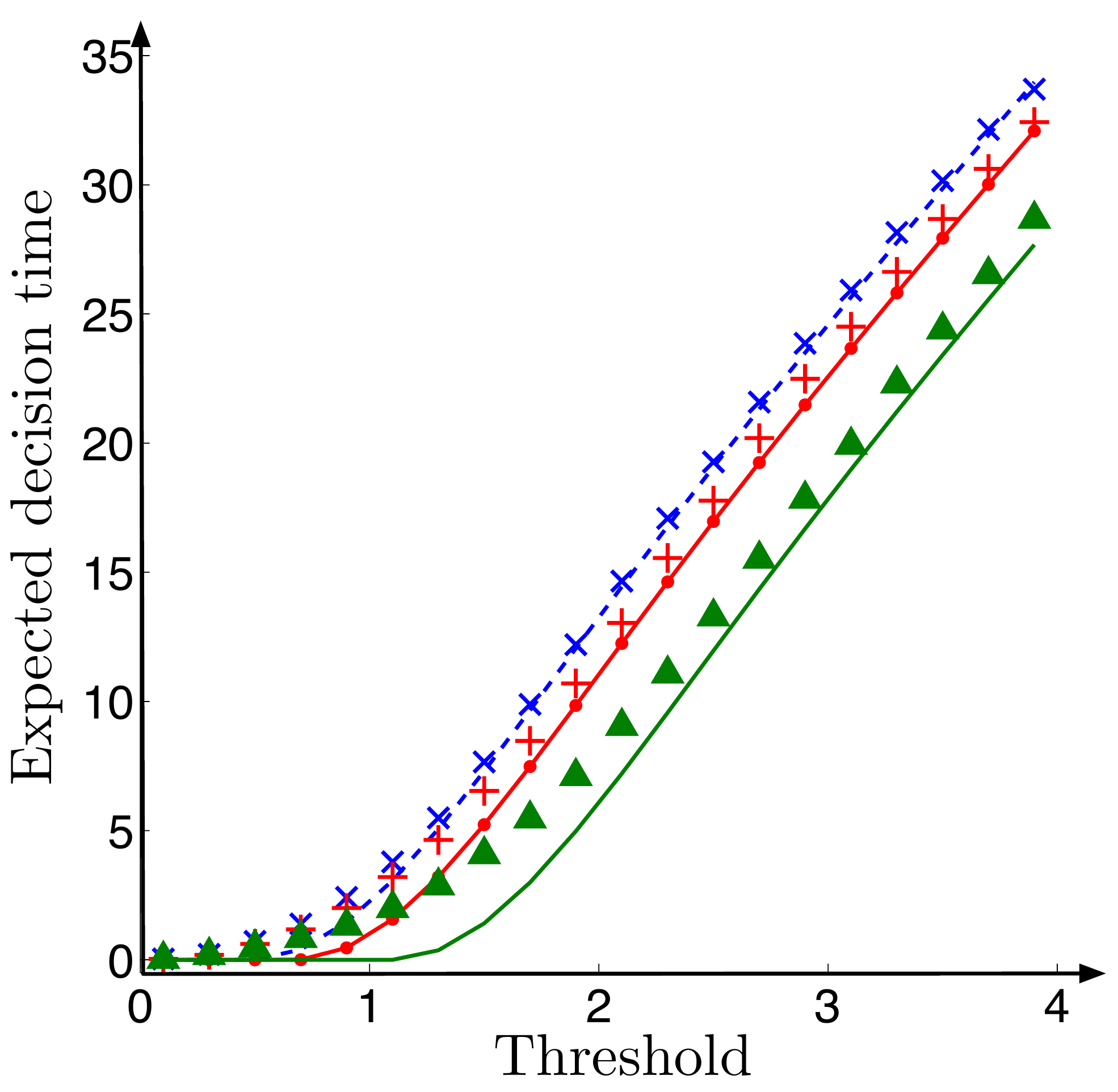}}
%\subfigure[Passage time distributions]{\includegraphics[width=0.3\textwidth]{}}
\caption{Comparison of the performance of the coupled DDM with the performance of the threshold corrected centralized DDM at each node. The blue $\times$, the red $+$, and the green triangles represent the performance of the coupled DDM for decision-makers $1$, $2$, and $6$, respectively.   
The blue dashed lines, the red solid lines with dots, and the green solid lines represent the performance of the threshold corrected centralized DDM for decision-makers $1$, $2$, and $6$, respectively. 
\label{fig:corrected-DDM}}
\end{figure}

\section{Optimal Threshold Design for the Speed-Accuracy Tradeoff} \label{sec:threshold-design}
In this section, we examine various threshold selection mechanisms for decision-makers in the group. We first discuss the Wald-like threshold selection mechanism that is well suited to threshold selection in engineering applications. Then, we discuss the Bayes risk minimizing mechanism and the reward rate maximizing mechanism, which are plausible threshold selection methods in human decision-making. In the following, we focus on the case of large thresholds and small error rates, and assume that the threshold correction function $\bar K$ is known. 
We also define the corrected threshold as $\supscr{\eta}{corr}_k = \max\{0,\eta_k - \bar K /\sqrt{\mu_k}\}$.

\subsection{Wald-like mechanism}
In the classical sequential hypothesis testing problem~\cite{AW:45}, the thresholds are designed such that the probability of error is below a prescribed value. In a similar spirit, we can pick threshold $\eta_k$ such that the probability of error is below a desired value $\alpha_k \in (0,1)$. 
Setting the error rate at node $k$ equal to $\alpha_k$ in the threshold corrected expression for the error rate, we obtain
%\[
%\supscr{\eta}{corr}_k \approx \frac{1}{2 \beta n} \log \Big ( \frac{1-\alpha_k}{\alpha_k} \Big). 
%\]
\[
\supscr{\eta_k}{wald}\approx  \frac{\bar K(\beta)}{\sqrt{\mu_k}} +\frac{1}{2 \beta n} \log \Big ( \frac{1-\alpha_k}{\alpha_k} \Big).
\]

Therefore, under the Wald's criterion, if each node has to achieve the same error rate $\alpha$, then the expected decision time at node $k$ is:
\[
\expt[\supscr{T}{wald}_k] \approx \frac{1-2\alpha}{\beta} \bigg(  \frac{\bar K(\beta)}{\sqrt{\mu_k}} +\frac{1}{2 \beta n} \log \Big ( \frac{1-\alpha}{\alpha} \Big)\bigg),
\]
i.e., a more centrally located decision-maker has a smaller expected decision time. 

\subsection{Bayes risk minimizing mechanism}
The Bayes risk minimization is one of the plausible mechanisms for threshold selection for humans~\cite{RB-EB-etal:06}. In this mechanism, the threshold $\eta_k$ is selected to minimize the Bayes risk ($\mr{BR}_k$) defined by
\[
\mr{BR}_k=  c_k \mr{ER}_k +  \mr{ET}_k,
\]
where $c_k\in \real_{\ge 0}$ is a parameter determined from empirical data~\cite{RB-EB-etal:06}. 
It is known~\cite{RB-EB-etal:06} that for the centralized DDM~\eqref{eq:centralized-ddm} the threshold $\supscr{\eta}{corr}_k$ under the Bayes risk criterion is determined by the solution of the following transcendental equation: 
\begin{equation}\label{eq:bayes-risk-transcendental}
2 c_k \beta^2 n - 4 \beta n \supscr{\eta}{corr}_k + e^{-2 \beta n \supscr{\eta}{corr}_k} - e^{2 \beta n \supscr{\eta}{corr}_k} =0. 
\end{equation}
Furthermore, if the cost $c_k$ is the same for each agent, then the corrected threshold obtained from equation~\eqref{eq:bayes-risk-transcendental} is the same for each decision-maker. Consequently, the error rate and the expected decision time are the same for each agent. However, the true threshold $\eta_k$ is smaller for a more centrally located agent.

\subsection{Reward rate maximizing mechanism}
Another plausible mechanism for threshold selection in humans is reward rate maximization~\cite{RB-EB-etal:06}. The reward rate ($\mr{RR}_k$) is defined by
\[
\mr{RR}_k = \frac{1-\mr{ER}_k}{\mr{ET}_k+ \supscr{T}{motor}_k+ D_k +\mr{ER}_k D^p_k},
\]
where $\supscr{T}{motor}_k$ is the motor time associated with the decision-making process, $D_k$ is the response time, and $D^p_k$ is the additional time that decision-maker $k$ takes after an erroneous decision (see~\cite{RB-EB-etal:06} for detailed description of the parameters).  It is known~\cite{RB-EB-etal:06} that for the centralized DDM~\eqref{eq:centralized-ddm}, the threshold $\supscr{\eta}{corr}_k$ under the reward rate criterion is determined by the solution of the following transcendental equation: 
\begin{equation}\label{eq:reward-rate-threshold-transcendental}
e^{2 \beta n \supscr{\eta}{corr}_k}-1 = 2 \beta^2 n (D_k + D^p_k + \supscr{T}{motor}_k - \supscr{\eta}{corr}_k/\beta).
\end{equation}
Moreover, if the parameters $\supscr{T}{motor}_k, D_k$, and $D^p_k$ are the same for each agent, then the corrected threshold $\supscr{\eta}{corr}_k$ obtained from equation~\eqref{eq:reward-rate-threshold-transcendental} is the same for each decision-maker. Consequently, the error rate and the expected decision time are the same for each agent. However, the true threshold $\eta_k$ is smaller for a more centrally located agent.

We now summarize the effect of the node centrality on the performance of the reduced DDM under four
threshold selection criteria, namely, (i) fixed threshold at each node, (ii) Wald criterion, (iii) Bayes risk, and (iv) reward rate in Table~\ref{table:performance-centrality}.

\begin{table}[ht!]
\caption{Behavior of  the performance  with increasing node centrality.\label{table:performance-centrality}}
\resizebox{\linewidth}{!} {
\begin{tabular}{|c||c|c|c|c|c|}
\hline
& Error rate & Expected  & Threshold & Bayes risk & Reward rate \\
& &decision time & & & \\
\hline\hline
Fixed threshold & decreases & increases & constant & - & - \\
\hline
Wald & constant & decreases & decreases & decreases & increases\\
\hline
Bayes risk & constant & constant & decreases & constant & constant\\
\hline
Reward rate & constant & constant & decreases & constant & constant \\
\hline\hline
\end{tabular}}
\end{table}

\section{Extensions to Other Decision-making Models}\label{sec:other-models}
In this section we extend the coupled DDM to other decision-making models. We first present the Ornstein-Uhlenbeck (O-U) model for human decision-making in two alternative choice tasks with recency effects, and extend the coupled DDM  to the coupled O-U model. We then present the race model for human decision-making in multiple alternative choice tasks, and extend the coupled DDM to the coupled race model.  %In the following 

\subsection{Ornstein-Uhlenbeck model}
The DDM is an ideal evidence aggregation model and assumes a perfect integration of the evidence. However, 
in reality, the evidence aggregation process has recency effects, i.e., the evidence aggregated later has more influence in the decision-making than the evidence aggregated earlier.
The Ornstein-Uhlenbeck (O-U) model extends the DDM for human decision-making to incorporate recency effects and is described as follows:
\begin{equation}\label{eq:o-u-process}
\mr d x(t) = (\beta -\theta x(t)) \mr d t + \sigma \mr d W(t), \quad x(0)=0, 
\end{equation}
where $\theta \in \real_{\ge 0}$ is a constant that produces a decay effect over the evidence aggregation process~\cite{RB-EB-etal:06}.  { It can be seen using the Euler discretization of~\eqref{eq:o-u-process} that the O-U model is the continuum limit of an autoregressive (AR(1)) model, and assigns exponentially decreasing weights to past observations.}

The evidence aggregation process~\eqref{eq:o-u-process}  is Markovian, stationary and Gaussian. The mean and the variance of the evidence $x(t)$ at time $t$ are $\beta (1-e^{-\theta t})/\theta$ and $\sigma^2(1-e^{-2\theta t})/2\theta$, respectively. 
{ The two decision hypotheses correspond to the drift rate being positive or negative, i.e., $\beta \in\real_{>0}$ and $\beta \in \real_{<0}$, respectively.}
The decision rules for the O-U model are the same as the decision rules for the DDM. 
The expected decision time and the error rate for the O-U model can be characterized in closed form. We refer the reader to~\cite{RB-EB-etal:06} for details.

\subsection{The coupled O-U model}
We now extend the coupled DDM to the coupled O-U model. In the spirit of the coupled DDM, we model the evidence aggregation across the network through the Laplacian flow. Without loss of generality,  we assume the diffusion rate is unity. The coupled O-U model is described as follows:
\begin{equation*}%\label{eq:coupled-o-u}
\mr d \bs x(t) = (-(L + \theta \mc I_n) \bs x(t) + \beta \bs 1_n ) \mr dt + \mr d \bs W_n(t), \; \bs x(0)=\bs 0_n,
\end{equation*}
where $\bs x \in \real^n$ is the vector of evidence for each agent, $L \in \real^{n\times n}$ is the Laplacian matrix associated with the interaction graph, $\theta \in \real_{>0}$ is a constant, $\beta \in \real$ is the drift rate and $\bs W_n(t)$ is the standard $n$-dimensional Weiner process.  

Similar to the coupled DDM, it can be shown that the solution to the coupled O-U model is a Gaussian process with mean and covariance at time $t$ given by
\begin{align*}
\expt[\bs x(t)] &= \frac{\beta(1-e^{-\theta t})}{\theta} \bs 1_n, \; \text{and}\\
\text{Cov}(x_k(t), x_j(t)) &= \sum_{p=1}^n \frac{1-e^{-2(\lambda_p +\theta) t}}{2(\lambda_p +\theta)} u_k^{(p)} u_j^{(p)},
\end{align*}
where $\lambda_p, p\in\until{n}$ are the eigenvalues of the Laplacian matrix $L$ and $\bs u^{(p)}$ are the associated eigenvectors.

Similar to Section~\ref{sec:prop-coupled-ddm},  principle component analysis followed by the error approximations yield the following reduced O-U model as a decoupled approximation to the coupled O-U model at node $k$:
\begin{equation*}%\label{eq:reduced-o-u}
\begin{bmatrix} \mr d  y_k(t) \\ \mr d  \varepsilon_k(t) \end{bmatrix}
= \left[\begin{smallmatrix}\beta -\theta y_k(t) + (\theta -\frac{\hat u_k}{2}) \varepsilon_k(t) \\ -\frac{\hat \mu_k  }{2} \varepsilon_k(t)  \end{smallmatrix}\right] \mr d t 
+  \begin{bmatrix} \frac{1}{\sqrt{n}} & 1 \\ 0 & 1 \end{bmatrix} \begin{bmatrix} \mr d  W_1(t) \\ \mr d  W_2(t) \end{bmatrix},
\end{equation*}
where $y_k$ is the evidence aggregated at node $k$, $\varepsilon_k$ is the error defined analogous to the error in the reduced DDM, and {$1/\hat \mu_k = \sum_{p=2}^n \frac{1}{2(\lambda_p +\theta)} (u_k^{(p)})^2$. }

Furthermore, similar to Proposition~\ref{prop:er-dt-pdes},  partial differential equations to compute the expected decision time and the error rate for the reduced O-U process can be derived. For parameter values in Section~\ref{sec:numerical} and $\theta=0.1$, a comparison between the performance of the coupled O-U model and the reduced O-U model is presented in Figure~\ref{fig:coupled-o-u}. 

\begin{figure}
\subfigure[Log-likelihood ratio of no error]{
\includegraphics[width=0.23\textwidth]{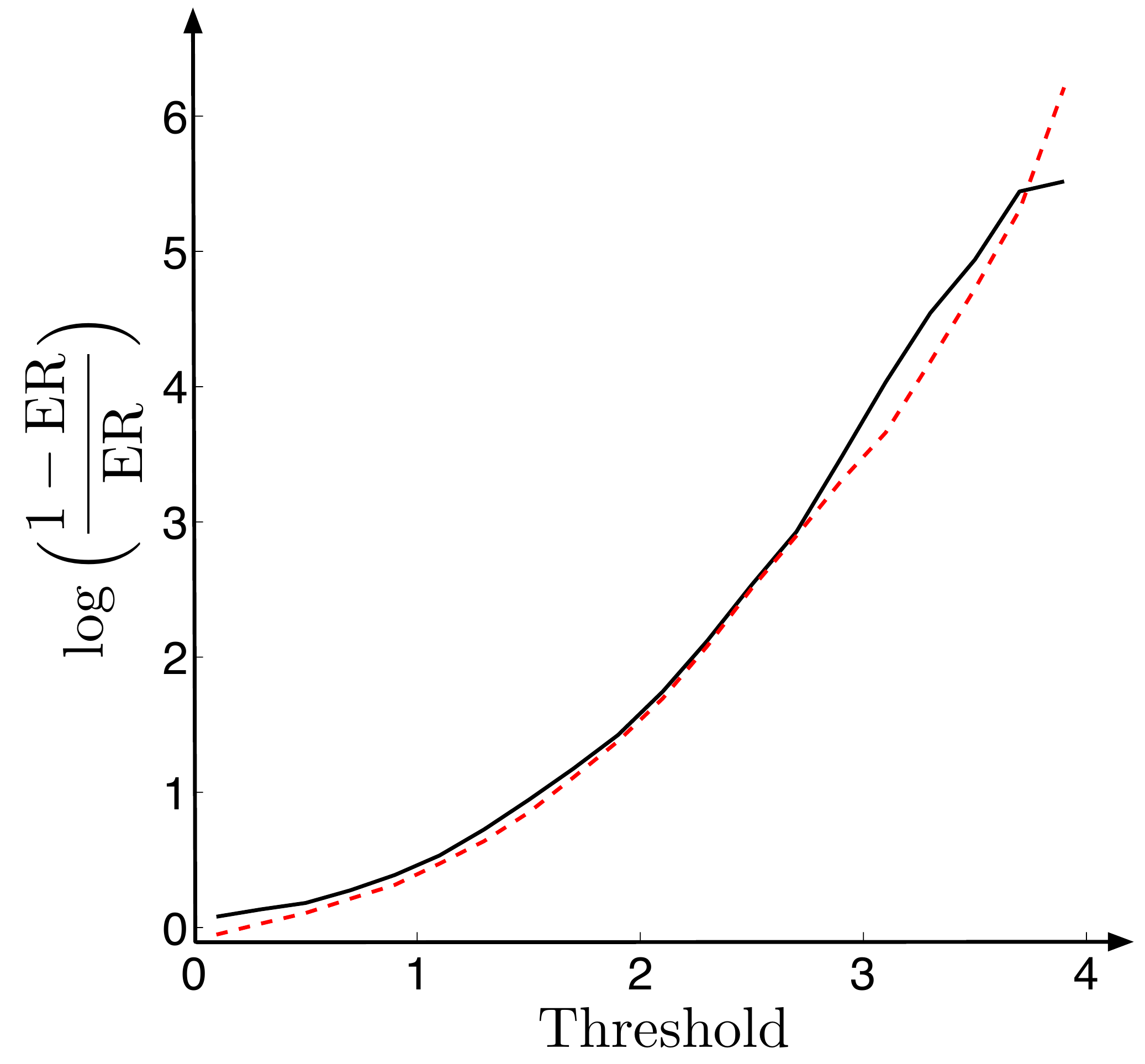}}
\subfigure[Expected decision times]{
\includegraphics[width=0.23\textwidth]{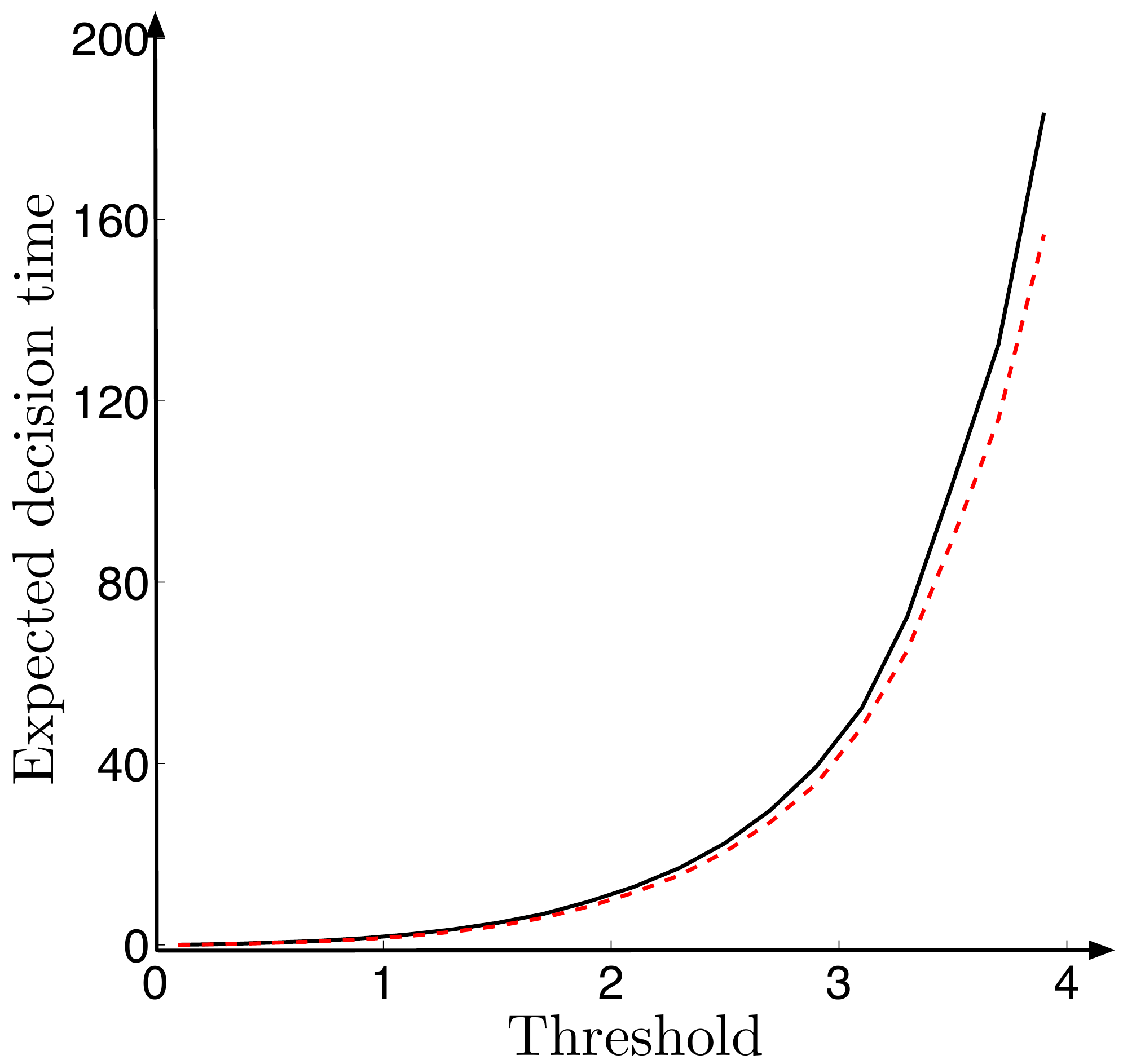}}
\caption{Error rates and decision times for reduced O-U model compared with the coupled O-U model. Solid black and dashed red lines represent the coupled O-U model and the reduced O-U model, respectively. \label{fig:coupled-o-u}}
\end{figure}

\subsection{The race model}
Consider the decision-making scenario in which the decision-maker has to choose amongst $m$ possible alternatives. Human decision-making in such multi-alternative choice tasks is modeled well by the \emph{race model}~\cite{TM-PH:06} described below.

Let the evidence aggregation process for an alternative $a\in \until{m}$ be modeled by the DDM
\begin{equation}\label{eq:evidence-multiple-alternative}
\mr dx^a(t) = \beta^a \mr dt + \sigma \mr d W^a(t),
\end{equation}
where $x^a(t)$ is the evidence in favor of alternative $a$ at time $t$, $\beta^a$ is the drift rate, $\sigma \in \real_{>0}$ is the diffusion rate, $W^a(t)$ is the realization of the standard one-dimensional Weiner process for alternative $a$. { The decision hypotheses correspond to the drift rate being positive for one alternative and zero for every other alternative, i.e., $\beta^{a_0} \in \real_{>0}$, for some  $a_0 \in \until{m}$, and $\beta^a =0$, for each $a\in \until{m}\setminus \{a_0\}$.}
%In spirit of~\cite{TM-PH:06}, we assume that $\beta^a =\beta$ for the alternative associated with the correct hypothesis and $\beta^a=0$ for alternatives associated with incorrect hypotheses.  

For the evidence aggregation process~\eqref{eq:evidence-multiple-alternative} and the free response paradigm, the decision is made in favor of the first alternative $a\in \until{m}$ that satisfies
\begin{equation}\label{eq:decision-rule}
x^a(t) - \max \setdef{x^j(t)}{j\in\until{m}\setminus\{a\}} \ge  \eta^a,
\end{equation}
where $\eta^a$ is the threshold for alternative $a$. For a prescribed maximum probability $\mr{R}^a$ of incorrectly deciding in favor of alternative $a$, the threshold is selected as $\eta^a = \log ((m-1)/m\mr{R}^a)$.

For the race model~\eqref{eq:evidence-multiple-alternative} and the decision rule~\eqref{eq:decision-rule}, the mean reaction time and the error rate can be asymptotically characterized; see~\cite{TM-PH:06} for details. 
The race model is the continuum limit of  an asymptotically optimal sequential multiple hypothesis test proposed in~\cite{VPD-AGT-VVV:99}. 

%
%Such a scenario under free response paradigm corresponds to a sequential multiple hypothesis test. For $m>2$, the optimal sequential hypothesis tests can not be characterized in closed/computationally  tractable form~\cite{VPD-AGT-VVV:99}. The computational intractability of the optimal sequential multiple hypothesis tests has lead to the development of computationally efficient and asymptotically optimal multiple hypothesis test. 

\subsection{The coupled race model}
We now develop a distributed version of the race model~\eqref{eq:evidence-multiple-alternative}. Without loss of generality,  we assume the diffusion rate is unity. In the spirit of the coupled DDM, we use the Laplacian flow to aggregate the evidence across the network. 
Let the evidence in favor of alternative $a$ at node $k$ and at time $t$ be $x_k^a(t)$.
Let $\vec {x}_k (t) \in \real^m$ 
be the column vector  with entries $x^a_k(t), a\in \until{m}$ 
and $\bs {\vec x}(t) \in \real^{mn}$ be the column vector formed by concatenating vectors 
$\vec x_k (t) \in \real^m$. We define the coupled race model by 
\begin{equation}\label{eq:coupled-race}
\mr d \bs {\vec x}(t) = -(L \otimes I_m) \bs {\vec x}(t) \mr dt +( \bs 1_n \otimes \bs \beta ) \mr dt + \mr d \bs{\vec{W}}_{mn}(t),
\end{equation}
with initial condition $\vec{\bs x}(0) = \bs 0_{mn}$, where $\otimes$ denotes the Kronecker product, $\bs \beta \in \real^m$ is the column vector with entries $\beta^a, a\in \until{m}$, and  $\bs{\vec{W}}_{mn}(t)$ is the standard $mn$-dimensional Weiner process.
Note that dynamics~\eqref{eq:coupled-race} are equivalent to running a set of $m$ parallel coupled DDMs, one for each alternative. 

For the evidence aggregation process~\eqref{eq:coupled-race}, node $k$ makes a decision in favor of the first alternative $a\in \until{m}$ that satisfies
\begin{equation*}%\label{eq:decision-rule-distributed}
x^a_k(t) - \max \setdef{x^j_k(t)}{j\in\until{m}\setminus\{a\}} \ge \eta^a_k,
\end{equation*}
where $\eta^a_k $ is the threshold for alternative $a$ at node $k$.
%and $\mr{R}^a_k$ is the probability of incorrectly deciding in favor of alternative $a$ at node $k$.  

We define the \emph{centralized race model} as the race model in which at each time all the evidence distributed across the network is available at a fusion center. Such a centralized DDM is obtained by replacing $\sigma$ in  ~\eqref{eq:evidence-multiple-alternative}  with
$1/\sqrt{n}$. It can be shown along the lines of Proposition~\ref{prop:asymptotic-optimality} that the coupled race model is asymptotically equivalent to the centralized race model, and hence, is asymptotically optimal. 

%We now study the performance of the coupled race model. 
%\begin{proposition}[\bit{Asymptotic optimality for the coupled RM}]\label{prop:couped-race}
%For each alternative $a\in \until{m}$, the evidence aggregated  by  the coupled race model~\eqref{eq:coupled-race} and the centralized race model are equivalent as $t\to +\infty$.
%\end{proposition}
%\begin{proof}
%The proof follows analogous to the proof of Proposition~\ref{prop:asymptotic-optimality}. 
%\end{proof}

As pointed out earlier, the coupled race model is equivalent to a set of $m$ parallel coupled DDMs. Thus, the analysis for coupled DDM extends to the coupled race model in a straightforward fashion. In particular, for each alternative, the evidence aggregation process can be split into the centralized process and the error process, which can be utilized to construct reduced DDMs for each alternative. Furthermore, similar to the case of the coupled DDM, threshold corrections can be computed for the coupled race model.

%The asymptotic equivalence in Theorem~\ref{thm:couped-race} implies that asymptotic theory developed for the race model~\cite{TM-PH:06} holds for the coupled race model as well. In particular, the asymptotic expressions for the mean reaction time and the error rate for the centralized race model also hold for the coupled race model. 

\section{Conclusions and Future Directions}\label{sec:conclusions}
In this paper, we used the context of two alternative choice tasks to study the speed-accuracy tradeoff in collective decision-making for a model of human groups with network interactions. 
We focused on the free response decision-making paradigm in which each individual takes their time to make a decision. We utilized the Laplacian flow based coupled DDM to capture the evidence aggregation process across the network. We developed the reduced DDM, a decoupled approximation to the coupled DDM. We characterized the efficiency of the decoupled approximation and derived partial differential equations for the expected decision time and error rate for the reduced DDM. We then derived high probability bounds on the expected decision time and error rate for the reduced DDM. 
We characterized the effect of the node centrality in the interaction graph of the group on  decision-making metrics under several threshold selection criteria. 
Finally, we extended the coupled DDM to the coupled O-U model for decision-making in the two alternative choice task with recency effects, and the coupled race model for the multi-alternative choice task.

There are several possible extensions to this work. First, in this paper, we utilized the Laplacian flow to model the evidence aggregation process across the network. It is of interest to consider other communication models for evidence aggregation across network, e.g., gossip communication, bounded confidence based communication, etc. Second, in this paper, we assumed that the drift rate for each agent is the same. However, in the context of robotic groups or animal groups, it may be the case that only a set of individuals (leaders)  have a positive drift rate while others individuals (followers) may have zero drift rate. It is of interest to extend this work to such leader-follower networks. Third, it is of interest to extend the results in this paper to more general decision-making tasks, e.g., the multi-armed bandit tasks~\cite{PR-VS-NEL:13d}.

\section*{Acknowledgments}
The authors thank  Philip Holmes and Jonathan D. Cohen for helpful discussions. We also thank the editor and two anonymous referees, whose comments greatly strengthened and clarified the paper.

\end{document}